\newtheorem{theorem}{Theorem}
\newtheorem{lemma}{Lemma}
\theoremstyle{definition}
\newtheorem{claim}{Claim}
\newtheorem{subclaim}{Claim}[claim]
\newtheorem{remark}{Remark}
\newtheorem{case}{Case}
\newtheorem{subcase}{Subcase}[case]
\begin{document}
\title
{\bf\Large Hamilton cycles in almost distance-hereditary graphs}

\author{
Bing Chen$^a$\thanks{Supported by NSFC (No.~11271300) and
the Scientific Research Program of Shaanxi Provincial Education
Department (No. 2013JK0580). E-mail address: cbing\_2004@163.com (B. Chen).}
and Bo Ning$^{b}$\thanks{Corresponding author. Supported by NSFC (No.~11271300)
and the Doctorate Foundation of Northwestern Polytechnical University
(No. cx201326). E-mail address: ningbo\_math84@mail.nwpu.edu.cn (B. Ning).}\\
\small $^{a}$Department of Applied Mathematics, School of Science, \\
\small Xi'an University of Technology, Xi'an, Shaanxi 710048, P.R.~China\\
\small $^{b}$Department of Applied Mathematics, School of Science,\\
\small  Northwestern Polytechnical University, Xi'an, Shaanxi 710072,
P.R.~China\\[2mm]}
\date{}
\maketitle

\begin{abstract} Let $G$ be a graph on $n\geq 3$ vertices. A
graph $G$ is almost distance-hereditary if each connected
induced subgraph $H$ of $G$ has the property $d_{H}(x,y)\leq d_{G}(x,y)+1$
for any pair of vertices $x,y\in V(H)$. A graph $G$ is called
1-heavy (2-heavy) if at least one (two) of the end vertices of
each induced subgraph of $G$ isomorphic to $K_{1,3}$ (a claw)
has (have) degree at least $n/2$, and called claw-heavy if each
claw of $G$ has a pair of end vertices with degree sum at least $n$.
Thus every 2-heavy graph is claw-heavy. In this paper we prove
the following two results: (1) Every 2-connected, claw-heavy and
almost distance-hereditary graph is Hamiltonian. (2) Every 3-connected,
1-heavy and almost distance-hereditary graph is Hamiltonian.
In particular, the first result improves a previous theorem of
Feng and Guo. Both results are sharp in some sense.
\medskip

\noindent {\bf Keywords:} Hamilton cycle; Almost distance-hereditary graph;
Claw-free graph; 1-heavy graph; 2-heavy graph; Claw-heavy graph
\smallskip

\noindent {\bf AMS Subject Classification (2000):} 05C38, 05C45
\end{abstract}

\section{Introduction}

We use Bondy and Murty \cite{Bondy_Murty} for terminology and
notation not defined here and consider simple graphs only.

Let $G$ be a graph. For a vertex $v$ and a subgraph $H$ of
$G$, we use $N_H(v)$ to denote the set, and $d_H(v)$ the
number, of neighbors of $v$ in $H$, respectively. We call
$d_H(v)$ the \emph{degree} of $v$ in $H$. For $x,y\in V(G)$,
an $(x,y)$-\emph{path} is a path connecting $x$ and $y$. If
$x,y\in V(H)$, the \emph{distance} between $x$ and $y$ in $H$,
denoted by $d_H(x,y)$, is the length of a shortest $(x,y)$-path
in $H$. When there is no danger of ambiguity, $N_G(v)$, $d_G(v)$
and $d_G(x,y)$ are abbreviated to $N(v)$, $d(v)$ and $d(x,y)$,
respectively.

A graph is called \emph{Hamiltonian} if it contains a
Hamilton cycle, i.e., a cycle passing through all the
vertices of the graph. The study of cycles, especially
Hamilton cycles, maybe one of the most important and most
studied areas of graph theory. It is well-known that to
determine whether a given graph contains a Hamilton cycle
is $\mathcal{NP}$-complete, shown by R.M. Karp \cite{Karp}.
However, if we only consider some restricted graph classes,
then the situation is completely changed. A graph $G$ is
called \emph{distance-hereditary} if each connected induced
subgraph $H$ has the property that $d_{H}(x,y)=d_{G}(x,y)$
for any pair of vertices $x,y$ in $H$. This concept was
introduced by Howorka \cite{Howorka} and a complete
characterization of distance-hereditary graphs can be
found in \cite{Howorka}. In 2002, Hsieh, Ho, Hsu and
Ko \cite{Hsieh_Ho_Hsu_Ko} obtained an $O(|V|+|E|)$-time
algorithm to solve the Hamiltonian problem on distance-hereditary
graphs. Some other optimization problems can also be solved
in linear time for distance-hereditary graphs although
they are proved to be $\mathcal{NP}$-hard for more general graphs.
For references in this direction, we refer to
\cite{Alessandro_Marina,Cogis_Thierry}.

A graph $G$ is called \emph{almost distance-hereditary}
if each connected induced subgraph $H$ of $G$ has the
property $d_{H}(x,y)\leq d_{G}(x,y)+1$ for any pair of
vertices $x,y\in V(H)$. For some properties and a
characterization of almost-distance hereditary graphs,
we refer to \cite{Aider}.

Let $G$ be a graph. An induced subgraph of $G$
isomorphic to $K_{1,3}$ is called a \emph{claw},
the vertex of degree 3 in it is called its \emph{center}
and the other vertices are its \emph{end vertices}.
$G$ is called \emph{claw-free} if $G$ contains no claw.
Throughout this paper, whenever the vertices of a
claw are listed, its center is always the first one.

The class of claw-free graphs is important in graph
theory. Maybe one big reason is due to Matthews and Sumner's
conjecture \cite{Mattews_Sumner} which states that every
4-connected claw-free graph is Hamiltonian. Many results
about the existence of Hamilton cycles in claw-free graphs
have been obtained. For surveys on Matthews and Sumner's
conjecture and on claw-free graphs, we refer the reader to
\cite{Broersma_Ryjacek_Vrana} and \cite{Faudree_Flandrin_Ryjacek},
respectively.

In particular, Feng and Guo \cite{Feng_Guo_1} gave the following
result on Hamiltonicity of almost distance-hereditary claw-free graphs.

\begin{theorem}[Feng and Guo \cite{Feng_Guo_1}]\label{th1}
Let $G$ be a 2-connected claw-free graph. If $G$ is almost
distance-hereditary, then $G$ is Hamiltonian.
\end{theorem}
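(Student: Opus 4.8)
The plan is to argue by contradiction through a longest-cycle analysis, exploiting two ingredients: a local structural consequence of claw-freeness, and a clean reformulation of the almost distance-hereditary hypothesis. For the latter, observe that if $Q$ is any induced path of $G$ with endpoints $u,v$, then $H:=G[V(Q)]$ is exactly $Q$, is connected, and satisfies $d_H(u,v)=\ell(Q)$, the length of $Q$; the hypothesis then forces $\ell(Q)\le d_G(u,v)+1$. Thus in $G$ \emph{every induced path between two vertices has length at most one more than their distance}. This is the engine of the proof: long induced paths between nearby vertices are forbidden, so the graph must carry many ``shortcut'' chords.

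Suppose $G$ is not Hamiltonian and let $C$ be a longest cycle, with a fixed orientation; since $G$ is non-Hamiltonian, some component $R$ of $G-V(C)$ is nonempty, and by 2-connectivity $R$ has at least two attachment vertices on $C$. First I would record the standard claw-free local structure. If $r\in R$ is adjacent to an attachment vertex $u$, then $r$ cannot be adjacent to $u^-$ or $u^+$ (otherwise $r$ could be inserted between two consecutive vertices of $C$, lengthening it); applying claw-freeness to the candidate claw $\{u;r,u^-,u^+\}$ then forces $u^-u^+\in E(G)$. Hence every attachment vertex $u$ carries the chord $u^-u^+$, a crucial ingredient for re-routing $C$.

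Next I would choose two attachment vertices $x,y$ that are consecutive along one arc $A=\overrightarrow{C}[x,y]$ (so $A$ has no internal attachment of $R$), and fix an induced $(x,y)$-path $P$ whose interior lies in $R$; let $\ell=\ell(P)\ge 2$. Maximality of $C$ gives $\ell\le|A|$ and $\ell\le|A'|$ for the complementary arc $A'$, while the reformulated hypothesis applied to $P$ gives $\ell\le d_G(x,y)+1$, so $d_G(x,y)\in\{\ell-1,\ell\}$. The same reformulation, applied to induced paths running along $A$ (for instance an induced $(x^+,y^-)$-path inside $G[V(A)]$, or an induced path from $r$ to an arc vertex), forces $A$ to contain chords: a long chordless stretch of $A$ would be an induced path far longer than the distance of its endpoints, which is impossible. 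In this way I would extract short ``jumps'' across $A$.

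The endgame is to combine the chords $x^-x^+$, $y^-y^+$ and the arc chords just produced with the detour $P$ through $R$ to build a cycle on $V(C)\cup(V(P)\setminus\{x,y\})$, i.e.\ a cycle strictly longer than $C$, contradicting maximality; equivalently, to produce a Hamilton path of $G[V(C)]$ between $x$ and $y$ and splice in $P$. I expect this final construction to be the main obstacle: one must show that the shortcuts guaranteed by the almost distance-hereditary property are positioned so that the detour through $R$ strictly increases the number of vertices on the cycle, and this requires a careful case analysis on $\ell$, on $d_G(x,y)$, and on the locations of the chords along $A$. Handling the boundary cases—most notably $\ell=2$, where a single vertex $r\in R$ is adjacent to both $x$ and $y$—and verifying that the rerouted cycle is genuinely longer in each configuration is where the real work lies.
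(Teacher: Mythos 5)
Your setup is sound and in fact mirrors the framework the paper uses for its stronger Theorem~3 (of which Theorem~1 is the claw-free special case): a longest cycle $C$, a component $R$ of $G-V(C)$, a path $P$ through $R$ between two attachments $v_i,v_j$ chosen so that the arc $C(v_i,v_j)$ and then $P$ are as short as possible, the standard non-adjacencies $rv_i^\pm\notin E(G)$ forcing $v_i^-v_i^+\in E(G)$ by claw-freeness, and the reformulation of the almost distance-hereditary hypothesis as a bound on lengths of induced paths (which is indeed equivalent to the definition, since a shortest path in a connected induced subgraph is an induced path of $G$). But the proposal stops exactly where the proof begins. You write that the endgame ``requires a careful case analysis on $\ell$, on $d_G(x,y)$, and on the locations of the chords'' and that this ``is where the real work lies'' --- that case analysis \emph{is} the theorem, and none of it is carried out. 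As it stands you have not produced a longer cycle, nor a forbidden claw, nor any contradiction.

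Concretely, what is missing is the identification of a specific configuration that the hypotheses cannot tolerate. The paper's route: first use the distance condition with one endpoint in $R$ (so that the $G$-distance is $2$ and the bound has teeth --- note that your suggested application to an induced $(x^+,y^-)$-path along the arc is not obviously useful, since $d_G(x^+,y^-)$ need not be small) to show $P$ has a single internal vertex $u_1$ and, applying it to $H=G[\{u_1,v_i^-\}\cup V(C[v_i,v_j])]-v_i$ with $d_G(v_i^-,u_1)=2$, to extract a vertex $w\in C(v_i^+,v_j^-)$ with $v_i^-w\in E(G)$ and $wv_j\in E(G)$. Then the claws centered at $w$ with end-vertex sets $\{w^+,v_j,v_i^-\}$ and $\{w^-,v_j,v_i^-\}$, combined with the catalogue of non-adjacencies that hold around a longest cycle (the paper's Lemma~3), force $v_i^-w^+\in E(G)$ and eventually $wv_j^+\in E(G)$, and these two edges are mutually incompatible because together with $v_i^-v_i^+$, $v_j^-v_j^+$ and $P$ they reroute $C$ into a strictly longer cycle. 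Without pinning down such a vertex $w$ and verifying each of these adjacencies and non-adjacencies, the claim that the shortcuts ``are positioned so that the detour through $R$ strictly increases the number of vertices on the cycle'' is an assertion of the conclusion, not a proof of it.
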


Let $G$ be a graph. A vertex $v$ of $G$ on $n$ vertices is called
\emph{heavy} if $d(v)\geq n/2$. Broersma et al.
\cite{Broersma_Ryjacek_Schiermeyer} introduced the concepts of
1-heavy graph and 2-heavy graph, and the concept of claw-heavy
graph was introduced by Fujisawa \cite{Fujisawa}. Following
\cite{Broersma_Ryjacek_Schiermeyer,Fujisawa}, we say that a claw in
$G$ is \emph{1-heavy} (\emph{2-heavy}) if at least one (two) of its
end vertices is (are) heavy. $G$ is called 1-heavy
(\emph{2-heavy}) if every claw of it is 1-heavy (2-heavy),
and called \emph{claw-heavy} if every claw of it has two
end vertices with degree sum at least $n$. It is easily
seen that every claw-free graph is 1-heavy (2-heavy,
claw-heavy), every 2-heavy graph is claw-heavy but
not every claw-heavy graph is 2-heavy, and every
claw-heavy graph is 1-heavy but not every 1-heavy
graph is claw-heavy.

Broersma et al. \cite{Broersma_Ryjacek_Schiermeyer} proved some
sufficient conditions for Hamiltonicity of 1-heavy graphs and
2-heavy graphs. Motivated by the works of Broersma et al.
\cite{Broersma_Ryjacek_Schiermeyer}, Feng and Guo
\cite{Feng_Guo_2} extended Theorem \ref{th1} to a larger
graph class of 2-heavy graphs.

\begin{theorem}[Feng and Guo \cite{Feng_Guo_2}]\label{th2}
Let $G$ be a 2-connected 2-heavy graph. If $G$ is almost
distance-hereditary, then $G$ is Hamiltonian.
\end{theorem}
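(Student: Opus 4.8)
The plan is to argue by contradiction, paralleling the claw-free case of Theorem~\ref{th1}: assume $G$ is $2$-connected, $2$-heavy and almost distance-hereditary but \emph{not} Hamiltonian, fix a longest cycle $C$ with a chosen orientation, and set $R:=V(G)\setminus V(C)\neq\emptyset$. Since $G$ is $2$-connected, every component of $G-V(C)$ has at least two neighbours on $C$; in particular one can fix a vertex $x\in R$ together with two distinct attachment vertices $u,w\in V(C)$ joined to $x$ by internally disjoint paths avoiding $C$ (a fan/Menger argument), and then reduce to analysing the arcs of $C$ between consecutive attachment points.

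The decisive structural input comes from the almost distance-hereditary hypothesis, and I would first isolate the consequence that \emph{$G$ contains no induced cycle of length at least $6$}. Indeed, if $v_1v_2\cdots v_\ell v_1$ were an induced $C_\ell$ with $\ell\ge 6$, then deleting $v_\ell$ leaves the induced path $H=v_1\cdots v_{\ell-1}$ with $d_H(v_1,v_{\ell-1})=\ell-2\ge 4$, while $d_G(v_1,v_{\ell-1})\le 2$ through $v_\ell$, contradicting $d_H\le d_G+1$. Now fix two neighbours $u,w$ of $x$ on $C$ that are consecutive along an arc $A$ (no interior vertex of $A$ is a neighbour of $x$), and let $Q$ be a shortest $(u,w)$-path inside $G[V(A)]$. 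Then $Q$ is induced, its interior meets no neighbour of $x$, so $x$ together with $Q$ is an induced cycle; by the bound above it has length at most $5$, whence $Q$ has at most four edges. Combined with the elementary longest-cycle insertion fact (no two neighbours of $x$ are consecutive on $C$, else $C$ could be lengthened), this pins the local picture down to a few short configurations.

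Within such a short configuration one locates an induced claw — for instance, with center a $C$-vertex $u$ adjacent to $x$ and to its two $C$-neighbours $u^-,u^+$, the triple $\{u;x,u^+,u^-\}$ is a claw whenever $x\not\sim u^\pm$ and $u^-\not\sim u^+$ — and this is precisely where the proof departs from the claw-free case, in which such claws are forbidden. The $2$-heavy hypothesis now supplies \emph{two} heavy end vertices of that claw, each of degree at least $n/2$. The final step is to convert this abundance of neighbours into a longer cycle: two heavy vertices have degree sum at least $n$, and a standard Ore-type crossing/insertion argument along $C$, rerouted through $x$ and its path in $R$, then produces a cycle longer than $C$, contradicting the maximality of $C$.

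The main obstacle I anticipate lies in the configuration analysis of the second and third steps. The arc $A$ need not be chordless as a subgraph of $C$, so one must work with shortest arc-paths and carefully track which chords can occur while keeping the induced-cycle bound available; and one must verify in each resulting short case that a genuine \emph{induced} claw is present and that its two heavy ends are positioned so that their large neighbourhoods can actually absorb the vertices of $R$ into a longer cycle. Handling the degenerate cases — a heavy end coinciding with an attachment vertex, or local adjacencies that destroy the claw — is where most of the work will go, and it is exactly here that the gap between the claw-free argument and the merely $2$-heavy argument must be bridged.
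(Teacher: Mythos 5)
Your opening moves are sound and do track the paper's actual argument (the paper does not reprove Theorem~\ref{th2} directly but obtains it as a special case of Theorem~\ref{th3}, whose proof has exactly this shape: longest cycle $C$, a path $P$ from a component $R$ of $G-V(C)$ between two attachment vertices chosen to minimize the arc and then $|V(P)|$, reduction to $|V(P)|=3$ via the distance condition, and then a hunt for a forbidden claw). Your observation that an almost distance-hereditary graph has no induced cycle of length at least $6$ is correct and is a clean repackaging of the distance arguments the paper uses repeatedly; note only that it forces your induced cycle through $x$ to give $|E(Q)|\le 3$, not $4$, and that passing from ``$x$ joined to $u,w$ by paths'' to ``$x$ adjacent to $u,w$'' itself needs a distance argument (the paper's Claim~3 that $r=1$).

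The genuine gap is in your final step. A claw at an attachment vertex $v_i$ with two heavy end vertices does \emph{not} yield a contradiction with the maximality of $C$. By the longest-cycle exclusions (Lemma~\ref{le3}$(a)$,$(b)$), the vertex of $R$ in that claw cannot be heavy together with $v_i^-$ or $v_i^+$, so the two heavy ends must be $v_i^-$ and $v_i^+$, giving $v_i^-v_i^+\in\widetilde{E}(G)$ --- but this is precisely the paper's Lemma~\ref{le3}$(g)$, a statement that \emph{holds} for every attachment vertex of a longest cycle in a claw-heavy graph and produces no longer cycle by itself: the obvious reroute $v_i^-v_i^+$ bypassing $v_i$ does not close up into a cycle containing $V(C)\cup V(P)$. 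There is no ``standard Ore-type crossing argument'' that finishes from here. What the paper actually needs is (i) the Ore-cycle insertion lemma (Lemma~\ref{le2}) to convert $\widetilde{E}$-cycles into genuine cycles, and (ii) a second-order configuration extracted from the distance condition: a vertex $w$ on the arc $C(v_i^+,v_j^-)$ adjacent to both $v_i^-$ and $v_j$, for which the claw-based deductions $v_i^-w^+\in\widetilde{E}(G)$ (Claim~5) and $wv_j^+\in\widetilde{E}(G)$ (Claim~6) contradict Lemma~\ref{le3}$(f)$. Your sketch also never addresses the case $v_i^-v_i^+\in E(G)$, in which there is no induced claw at the attachment vertex at all --- and that is the main case, the one all of the paper's Claims~4--6 are devoted to. As it stands, the proposal identifies the right objects but stops exactly where the proof becomes nontrivial.
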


Later, Chen et al. \cite{Chen_Zhang_Qiao} extended two results
of Broersma et al. \cite{Broersma_Ryjacek_Schiermeyer} to the
more larger graph class of claw-heavy graphs. Motivated by
Chen et al.' previous works \cite{Chen_Zhang_Qiao}, in
this paper we obtain the following two theorems which
extend Theorem \ref{th1} and Theorem \ref{th2}.
In particular, Theorem \ref{th3} improves Theorem \ref{th2}.

\begin{theorem}\label{th3}
Let $G$ be a 2-connected claw-heavy graph. If $G$ is almost
distance-hereditary, then $G$ is Hamiltonian.
\end{theorem}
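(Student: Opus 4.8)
The plan is to prove Theorem~\ref{th3} by contradiction, following the standard longest-cycle paradigm that underlies Theorem~\ref{th1} and Theorem~\ref{th2}. First I would suppose $G$ is a $2$-connected, claw-heavy, almost distance-hereditary graph that is \emph{not} Hamiltonian, and fix a longest cycle $C$ in $G$, given an orientation. Since $G$ is not Hamiltonian and $2$-connected, there exists a vertex $x_0\in V(G)\setminus V(C)$, and by $2$-connectedness one can attach $x_0$ to $C$ by two vertex-disjoint paths, so in particular there is a path $P$ from a vertex outside $C$ into $C$. The aim is to extract, from the interaction between an off-cycle vertex and its neighbors on $C$, a configuration that contradicts the maximality of $C$.

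\medskip

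\noindent\textbf{Key structural steps.} The main engine will be the almost-distance-hereditary hypothesis applied to a carefully chosen connected induced subgraph $H$. Let $x$ be a vertex off $C$ with a neighbor on $C$; write $u_1,u_2,\dots$ for the neighbors of $x$ on $C$ in the orientation of $C$, and let $u_i^{+}$, $u_i^{-}$ denote the successor and predecessor of $u_i$ along $C$. A first observation (as in the claw-free and $2$-heavy arguments) is that the successors $u_i^{+}$ are pairwise non-adjacent and non-adjacent to $x$, since otherwise $C$ could be rerouted through $x$ to produce a longer cycle, contradicting maximality. The plan is then to look at triples such as $\{x,u_i^{+},u_j^{+}\}$ together with the common neighbor structure: if $u_i$ (or some cycle vertex) together with two such successors and $x$ forms an induced $K_{1,3}$, the \emph{claw-heavy} hypothesis forces two of its end vertices to have degree sum at least $n$. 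I would combine this degree information with the distance bound $d_H(x,y)\le d_G(x,y)+1$ to locate a short path in a suitable induced subgraph $H$ that, when spliced into $C$, absorbs the off-cycle vertex $x$ and lengthens the cycle.

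\medskip

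\noindent\textbf{Exploiting the two hypotheses together.} The delicate point is to make the claw-heavy degree condition and the almost-distance-hereditary condition cooperate. Concretely, I would take $H$ to be the subgraph induced by $V(C)$ together with $x$ (or a small segment of $C$ together with $x$ and one or two of its neighbors), so that $d_H(x,y)\le d_G(x,y)+1$ yields a path of controlled length between two cycle-neighbors of $x$ that detours through $x$. Almost-distance-hereditary guarantees this detour costs at most one extra edge, which is exactly the slack needed to reinsert $x$ without losing more than one cycle vertex; the claw-heavy hypothesis then supplies enough adjacencies (via a high-degree end vertex of a claw centered on $C$) to recover that lost vertex and close a strictly longer cycle. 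I expect to organize the endgame into cases according to how many neighbors $x$ has on $C$ and whether the relevant triple actually induces a claw.

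\medskip

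\noindent\textbf{Main obstacle.} The hardest part will be the case analysis when the relevant triple $\{x, u_i^{+}, u_j^{+}\}$ fails to be an induced claw because some would-be non-edge is in fact an edge. In that situation the claw-heavy hypothesis gives no degree information directly, and one must instead use that very edge to reroute $C$; keeping track of which configurations yield a genuine claw (so that claw-heaviness applies) versus which yield an immediate rerouting (so that maximality applies) is where the bookkeeping is most intricate. A secondary difficulty is ensuring the induced subgraph $H$ chosen for the distance bound is \emph{connected}, since the almost-distance-hereditary property only constrains connected induced subgraphs; I would need to verify connectivity of $H$ before invoking $d_H(x,y)\le d_G(x,y)+1$. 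Since claw-heavy is strictly weaker than $2$-heavy, the degree bound available here is a \emph{sum} over two end vertices rather than a bound on each individually, so wherever the Feng--Guo proof of Theorem~\ref{th2} uses $d(v)\ge n/2$ for a single vertex, I anticipate having to replace it by an averaging or neighborhood-intersection argument of the form $d(u)+d(v)\ge n \Rightarrow |N(u)\cap N(v)|$ is large, which is the main place the proof must genuinely strengthen the earlier argument.
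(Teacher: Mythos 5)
Your outline correctly identifies the general paradigm the paper uses (argue by contradiction on a longest cycle $C$, take a component $R$ of $G-V(C)$ and a path $P$ between two neighbors $v_i,v_j$ of $R$ on $C$, show successors/predecessors of attachment vertices are non-adjacent, and play the claw-heavy condition against the almost-distance-hereditary condition on small connected induced subgraphs). But as written it has a genuine gap at exactly the point you flag as the ``main place the proof must genuinely strengthen the earlier argument.'' Your proposed substitute for the pointwise degree bound --- an averaging or neighborhood-intersection argument from $d(u)+d(v)\ge n$ --- is not developed and is not what makes the claw-heavy case go through. The paper's essential device is the notion of an \emph{Ore-cycle}: it works throughout with the relation $\widetilde{E}(G)=\{xy: xy\in E(G) \text{ or } d(x)+d(y)\ge n\}$, treats degree-sum-heavy pairs as virtual edges when rerouting $C$, and then invokes the lemma of Li, Ryj\'a\v{c}ek, Wang and Zhang (Lemma~\ref{le2}) to convert any Ore-cycle $C'$ with $V(C)\subset V(C')$ into a genuine cycle containing all of $V(C')$, contradicting the maximality of $C$. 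All of the non-adjacency facts (Lemma~\ref{le3}(a)--(h)) are stated in terms of $\widetilde{E}(G)$, which is precisely what lets a claw with two end vertices of degree sum at least $n$ be exploited even when neither end vertex individually is heavy. Without this (or an equivalent mechanism), your plan stalls: knowing $|N(u)\cap N(v)|$ is large does not by itself produce the longer cycle you need.

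A second, smaller gap: taking $H=G[V(C)\cup\{x\}]$ is too coarse for the distance argument. The paper's contradictions come from very specific small induced subgraphs, e.g.\ $G[V(P)\cup V(C[v_i,v_j])]-v_j$ to force $|V(P)|=3$, and $G[\{u_1,v_i^-\}\cup V(C[v_i,v_j])]-v_i$ to manufacture a vertex $w\in C(v_i^+,v_j^-)$ adjacent to both $v_i^-$ and $v_j$; the final contradiction is between two claims ($v_i^-w^+\in\widetilde{E}(G)$ and $wv_j^+\in\widetilde{E}(G)$) each extracted from a claw that must not be ``$o$-light.'' Your extremal choice of $P$ (you do not specify minimizing $|V(C(v_i,v_j))|$ first and $|V(P)|$ second) is also needed to make these subgraphs induced paths of the right length. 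So the skeleton is right, but the two load-bearing ideas --- the Ore-cycle reduction and the construction of the intermediate vertex $w$ --- are missing.
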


\begin{theorem}\label{th4}
Let $G$ be a 3-connected 1-heavy graph. If $G$ is almost
distance-hereditary, then $G$ is Hamiltonian.
\end{theorem}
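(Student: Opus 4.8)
The plan is to argue by contradiction through a longest-cycle analysis. Suppose $G$ satisfies the hypotheses but is not Hamiltonian. Fix a longest cycle $C$ with a cyclic orientation, and write $x^+$ and $x^-$ for the successor and predecessor of $x\in V(C)$ along this orientation. Since $G$ is not Hamiltonian, $V(G)\setminus V(C)\neq\emptyset$; let $H$ be a component of $G-V(C)$ and let $X=\{x_1,\dots,x_m\}$ be the vertices of $C$ having a neighbor in $H$, listed in the cyclic order of $C$. Because $G$ is $3$-connected, Menger's theorem yields three internally disjoint paths from $H$ to $C$ with distinct endpoints, so $m\geq 3$; this third attachment vertex, absent in the $2$-connected case, is exactly what must compensate for the weaker $1$-heavy hypothesis.

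First I would record the standard consequences of the maximality of $C$. For each $x_i$ choose $w_i\in V(H)\cap N(x_i)$. Rerouting through $H$ shows that no $x_i^+$ has a neighbor in $H$, that $x_i^+x_j^+\notin E(G)$ for $i\neq j$ (so $\{x_1^+,\dots,x_m^+\}$ is independent), and symmetrically for the predecessors. Moreover $w_i$ is nonadjacent to both $x_i^-$ and $x_i^+$, so the only possible edge among $x_i^-,x_i^+,w_i$ is $x_i^-x_i^+$; whenever $x_i^-x_i^+\notin E(G)$ the set $\{x_i;x_i^-,x_i^+,w_i\}$ induces a claw. Applying the $1$-heavy hypothesis to each such claw produces, at every attachment vertex, a heavy vertex lying in $\{x_i^-,x_i^+\}\cup V(H)$.

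The technical heart is to use the almost distance-hereditary property to bound the gaps between consecutive attachment vertices. If $x_i,x_{i+1}$ are consecutive in $X$ and are joined through $H$, then $d_G(x_i,x_{i+1})\leq 2$. Let $F$ be the induced subgraph on the arc $C[x_i,x_{i+1}]$, which is connected; the almost distance-hereditary property gives $d_F(x_i,x_{i+1})\leq d_G(x_i,x_{i+1})+1\leq 3$. Hence the arc carries an $(x_i,x_{i+1})$-path of length at most $3$, so either the arc itself has length at most $3$ or it possesses chords. I would then rule out the chord configurations by insertion/rerouting arguments against the maximality of $C$ (a chord skipping arc vertices, together with the detour $x_i\,w\,x_{i+1}$ through $H$ and reinsertion of the skipped vertices, would yield a longer cycle). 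This forces every gap between consecutive attachment vertices to have length exactly $2$ or $3$, pinning down the local structure of $C$ around $X$.

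Finally I would combine this structure with a degree count. The heavy vertices from the second step have degree at least $n/2$, yet their neighborhoods are confined: the independent set $\{x_1^+,\dots,x_m^+\}$ and all of $V(H)$ are forbidden from the neighborhood of a heavy successor, while an $H$-vertex that is heavy meets $C$ only in the restricted set $X$. Using $m\geq 3$ together with the short gaps just established, the available room on $C$ becomes too small to accommodate degrees reaching $n/2$ without creating either two adjacent successors or a cycle-extending chord, contradicting maximality. I expect the main obstacle to be this last step: with only the $1$-heavy hypothesis one controls a single heavy end per claw, so the argument must extract the full strength of $3$-connectivity — the third attachment vertex and the third disjoint $H$-path — to localize enough heavy neighbors, and a careful case analysis on whether each heavy end is a successor, a predecessor, or a vertex of $H$, and on whether each gap has length $2$ or $3$, will be unavoidable.
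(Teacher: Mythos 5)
Your outline has the right opening moves (longest cycle, three attachment vertices from $3$-connectivity, claws at attachment points), but two of its load-bearing steps are unsound, and a third is missing entirely.

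First, the claim that every gap between consecutive attachment vertices has length $2$ or $3$ is false, and the proposed justification does not work. The almost distance-hereditary property applied to the induced subgraph on an arc gives a short $(x_i,x_{i+1})$-path \emph{through chords of the arc}, not a short arc; and you cannot ``rule out the chord configurations by insertion/rerouting,'' because a chord $ab$ together with the detour through $H$ leaves the skipped segment $C(a,b)$ stranded with no way to reinsert it. Indeed, the paper proves the opposite: in this situation $|V(C[v_i,v_j])|\geq 5$, and the correct use of the ADH property is to extract a specific intermediate vertex $w$ (or $l_i$) adjacent to both ends of the short chordal path, around which one builds claws and cycle extensions. Your step also presupposes $d_G(x_i,x_{i+1})\leq 2$, which requires $x_i$ and $x_{i+1}$ to share a common neighbour in $H$; establishing that the three attachment paths can be taken to emanate from a \emph{single} vertex $u$ of $H$ is itself a nontrivial reduction (the paper's Claim~1 in the proof of Theorem~4), proved by another application of the ADH property, and you cannot simply assume it.

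Second, the final ``degree count'' is not an argument but a hope, and it is precisely where the $1$-heavy hypothesis bites. The paper avoids any global counting: it works with a longest \emph{heavy} cycle (every heavy vertex lies on $C$, so all of $V(H)$ and all internal fan vertices are light) and with Ore-cycles, i.e.\ cycles in $\widetilde E(G)=\{xy: xy\in E(G)\text{ or }d(x)+d(y)\geq n\}$, which by the Li--Ryj\'a\v{c}ek--Wang--Zhang lemma can be converted into genuine longer cycles. This machinery is what lets one conclude that at most one vertex of $N_C^+(u)$ and at most one of $N_C^-(u)$ is heavy, and then force \emph{light} claws (contradicting $1$-heaviness) or cycle-extending Ore-cycles in every case. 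Without the heavy-cycle/Ore-cycle apparatus, the single heavy end vertex guaranteed per claw gives you no usable structure, and I do not see how your confinement-of-neighbourhoods count could close the argument. The gap is therefore not a detail to be checked but a missing central idea.
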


\begin{remark}
The graph in Fig.1 shows that the result in Theorem \ref{th3} indeed
strengthen that in Theorem \ref{th2}. As shown in
\cite[Fig.2]{Chen_Zhang_Qiao}, let $n\geq 10$ be an even
integer and $K_{n/2}+K_{n/2-3}$ denote the join of two
complete graphs $K_{n/2}$ and $K_{n/2-3}$.
Choose a vertex $y\in V(K_{n/2})$ and
construct a graph $G$ with
$V(G)=V(K_{n/2}+K_{n/2-3})\cup \{v,u,x\}$ and $E(G)=E(K_{n/2}+
K_{n/2-3})\cup \{uv,uy,ux\}\cup \{vw,xw: w\in V(K_{n/2-3})\}$.
It is easy to see that $G$ is a Hamiltonian graph satisfying
the condition of Theorem \ref{th3}, but not the condition
of Theorem \ref{th2}.

\vskip -5.5cm \hspace
{-2.6cm}\includegraphics[scale=0.6]{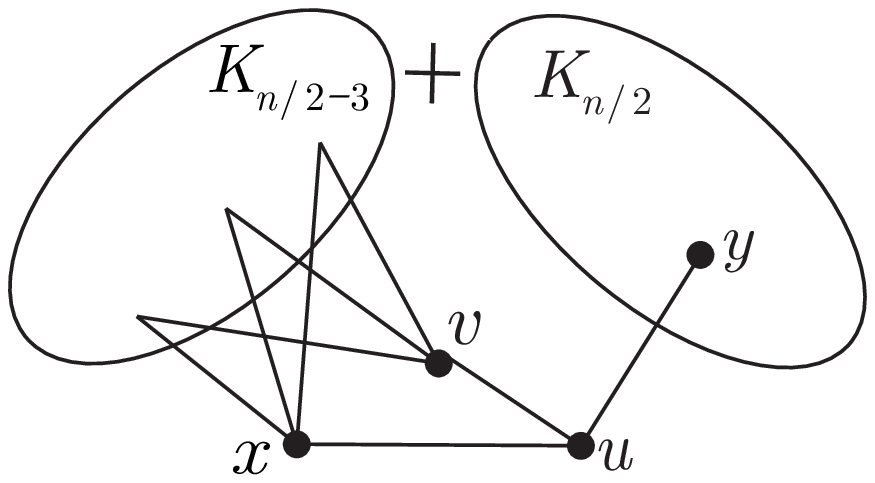}\vskip -0.5cm
 \hspace {6cm}
 {\small{Fig.1}}
\vskip 0.5cm

\begin{remark}
In \cite{Feng_Guo_2}, Feng and Guo constructed a graph which
is 2-connected almost distance-hereditary 1-heavy graph but
not Hamiltonian.  Hence the connectivity condition in our
Theorem \ref{th4} is sharp.
\end{remark}

We postpone the proofs of Theorem \ref{th3} and \ref{th4} to the next section.
\end{remark}

\section{Proofs of Theorems 3 and 4}
Before the proofs, we first introduce some additional
terminology and notation.

Let $H$ be a path (a cycle) with a given orientation.
We denote $\overleftarrow{H}$ by the same graph as
$H$ but with the reverse orientation. When $v\in V(H)$,
$v_H^+$ and $v_H^-$ denote the successor (if it exists) and
the predecessor (if it exists) of $v$ on $H$, respectively.
If $S\subseteq V(H)$, then define $S^+_H=\{s^+_H: s\in S\}$ and
$S^-_H=\{s^-_H: s\in S\}$. If there is no danger of ambiguity,
then we denote $v_H^+$, $v_H^-$, $S_H^+$ and $S_H^-$ by
$v^+$, $v^-$,  $S^+$ and $S^-$, respectively. For
two vertices $u,v\in V(H)$, we use $H[u,v]$ to denote the
segment of $H$ from $u$ to $v$, and by $H[u,v)$, $H(u,v]$ and $H(u,v)$,
we mean the path $H[u,v]-\{v\}$, $H[u,v]-\{u\}$ and $H[u,v]-\{u,v\}$,
respectively.

Let $G$ be a graph on $n$ vertices and $k\geq 3$ be an integer.
Recall that a vertex of degree at least $n/2$ in $G$ is a
\emph{heavy vertex}; otherwise it is \emph{light}. A claw in $G$
is called a \emph{light claw} if all its end vertices are light,
and called an \emph{o-light claw} if any pair of end vertices has
degree sum less than $n$.
A \emph{heavy cycle} in $G$ is a cycle containing all
heavy vertices in $G$. Following \cite{Li_Ryjacek_Wang_Zhang},
we use $\widetilde{E}(G)$ to denote the set $\{xy: xy\in E(G)$
or $d(x)+d(y)\geq n, x, y\in V(G)\}$. A sequence of vertices
$C=u_1u_2\ldots u_ku_1$ is called an {\it Ore-cycle} or briefly,
{\it o-cycle} of $G$, if $u_iu_{i+1}\in \widetilde{E}(G)$
for every $i\in [1,k]$, where $u_1=u_{k+1}$.

To prove Theorems \ref{th3} and \ref{th4}, the following
three lemmas are needed.

\begin{lemma}[Bollob\'{a}s and Brightwell \cite{Bollobas_Brightwell}, Shi \cite{Shi}]\label{le1}
Every 2-connected graph contains a heavy cycle.
\end{lemma}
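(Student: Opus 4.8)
The plan is to argue by contradiction using an extremal-cycle argument, which is the standard route for such Dirac/Ore-type "cycle through prescribed vertices" results. Let $W$ denote the set of heavy vertices of $G$. Among all cycles of $G$, I would choose one, say $C$, that contains as many vertices of $W$ as possible, and subject to this is of maximum length; fix an orientation of $C$. If $W\subseteq V(C)$ we are done, so suppose some heavy vertex $v$ lies off $C$. Let $D$ be the component of $G-V(C)$ containing $v$ and let $A=N_C(D)\subseteq V(C)$ be its attachment set. Since $G$ is 2-connected and $v\notin V(C)$, the fan version of Menger's theorem gives $|A|\geq 2$, so $D$ is joined to $C$ at two or more distinct vertices; this is what makes the rerouting below possible.

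The heart of the proof is a degree count on $v$. Consider first the clean case in which every neighbour of $v$ lies on $C$, i.e. $X:=N(v)\cap V(C)=N(v)$, and write $X^{+}$ for the set of successors on $C$ of the vertices of $X$. I claim $X$, $X^{+}$ and $\{v\}$ are pairwise disjoint subsets of $V(G)$. Indeed, $v\notin V(C)\supseteq X\cup X^{+}$; and if some $x^{+}\in X$ with $x\in X$, then $x$ and $x^{+}$ are consecutive on $C$ and both adjacent to $v$, so replacing the edge $xx^{+}$ by the path $xvx^{+}$ yields a cycle one longer than $C$ containing all heavy vertices of $C$ together with $v$, contradicting the extremal choice of $C$; hence $X\cap X^{+}=\emptyset$. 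Since $|X^{+}|=|X|=d(v)$, disjointness forces $2d(v)+1\leq n$, contradicting $d(v)\geq n/2$. The same rerouting idea underlies the general case: one detours from the cycle into $D$ and back, passing through $v$, between two attachment vertices, thereby producing a cycle that gains $v$ (a heavy vertex) while dropping only an arc of $C$; the extremality of $C$ then forces strong structural constraints on $A$ and on the arcs between consecutive attachment vertices.

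The main obstacle is precisely the general case, where $v$ may have neighbours inside $D\setminus\{v\}$, so the count $d(v)\leq |X|$ no longer holds directly and the simple ``successor'' argument must be replaced by the bridge/insertion machinery. Here I would list the attachment vertices $a_1,\dots,a_m$ of $D$ in cyclic order along $C$, show (using the detour through $v$ together with the maximality of $C$) that no two attachment vertices are consecutive on $C$, so that the successors $a_1^{+},\dots,a_m^{+}$ are distinct and avoid $A$, and then bound $d(v)$ by separately accounting for its neighbours inside $D$ and its ``projected'' neighbours on $C$ via insertible-vertex arguments. Combining these bounds against $d(v)\geq n/2$ should again yield $2d(v)\geq n$ exceeding the number of available vertices, the desired contradiction. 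Since this is the classical heavy-cycle theorem of Bollob\'{a}s and Brightwell and of Shi, I would present the extremal framework in full and invoke the standard insertion estimates to close the general case rather than re-deriving them from scratch.
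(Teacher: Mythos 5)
The paper offers no proof of Lemma~\ref{le1}; it is invoked as a known theorem of Bollob\'as--Brightwell and Shi, so there is no ``paper proof'' to match yours against. Judged on its own terms, your attempt has a genuine gap exactly where you flag ``the main obstacle.'' Your extremal setup and your clean case are fine: if every neighbour of the missing heavy vertex $v$ lies on $C$, the sets $X$, $X^{+}$, $\{v\}$ are pairwise disjoint and $2d(v)+1\le n$ gives the contradiction. But the general case is the entire content of the theorem, and the counting you propose does not close it. If $v$ has $d_D(v)$ neighbours inside the component $D$ and $|X|$ neighbours on $C$, the disjointness of $N(v)\cap V(D)$, $X$, $X^{+}$ and $\{v\}$ only yields $d_D(v)+2|X|+1\le n$, which is perfectly consistent with $d(v)=d_D(v)+|X|\ge n/2$ (take, say, $n=10$, $d_D(v)=5$, $|X|=2$). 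So ``the same rerouting idea'' does not ``again yield $2d(v)\ge n$''; each neighbour of $v$ in $D$ must be charged to a distinct forbidden vertex on or off $C$, and that pairing is precisely the nontrivial step you defer to unspecified ``standard insertion estimates.''

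There is a second, subtler weakness in the general case: when you detour through $D$ between two non-consecutive attachment vertices, the discarded arc of $C$ may itself contain heavy vertices, so the extremality of $C$ (which prioritises the count of heavy vertices, not length) gives you much weaker structural information than in the longest-cycle setting; only detours replacing a single edge are ``free.'' You do correctly deduce that no two attachment vertices are consecutive, but the ``strong structural constraints'' you then invoke are not established. As written, the argument proves the lemma only in the special case $N(v)\subseteq V(C)$; to be acceptable you would either have to carry out the insertion/charging argument in full (essentially reproving Shi's theorem) or simply cite the result, as the paper does.
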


\begin{lemma}[Li, Ryj\'a\v{c}ek, Wang and Zhang \cite{Li_Ryjacek_Wang_Zhang}]\label{le2}
Let $G$ be graph on $n$
vertices and $C'$ be an o-cycle of $G$. Then there exists a cycle $C$ of $G$ such that
$V(C')\subseteq V(C)$.
\end{lemma}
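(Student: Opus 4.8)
The plan is to induct on the number $t$ of \emph{Ore-pairs} of $C'$, by which I mean the number of consecutive pairs $u_iu_{i+1}$ with $u_iu_{i+1}\notin E(G)$ (so that necessarily $d(u_i)+d(u_{i+1})\ge n$). If $t=0$, then every consecutive pair is a genuine edge and $C'$ is itself a cycle of $G$, so we may take $C=C'$. For the inductive step I would show that whenever $t\ge 1$ one can replace a single Ore-pair by a real edge or a real $2$-path, obtaining a new o-cycle whose vertex set still contains $V(C')$ but which has strictly fewer Ore-pairs; the induction hypothesis then finishes the proof. Since $t$ drops by at least one at each step and the number of vertices never exceeds $n$, the process terminates.

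The engine of the inductive step is a count of common neighbours. Fix an Ore-pair $x=u_i$, $y=u_{i+1}$, so $xy\notin E(G)$ and $d(x)+d(y)\ge n$. Since $N(x)\cup N(y)\subseteq V(G)\setminus\{x,y\}$ has at most $n-2$ elements, inclusion--exclusion gives $|N(x)\cap N(y)|\ge d(x)+d(y)-(n-2)\ge 2$, so $x$ and $y$ have at least two common neighbours. If one such common neighbour $w$ lies off $C'$, I splice it in: replacing the pair $xy$ by the path $xwy$ yields an o-cycle through $V(C')\cup\{w\}$ with one fewer Ore-pair, since both $xw$ and $wy$ are genuine edges, and we recurse.

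The remaining, and main, case is when every common neighbour of $x$ and $y$ lies on $C'$, so that inserting a new vertex is impossible and I must instead reroute by a crossing. Writing the o-path obtained from $C'$ by deleting the pair $xy$ as $x=y_1y_2\cdots y_m=y$ (a relabelling of $C'$, $m=k$), I look for an index $j$ with $y_1y_{j+1}\in E(G)$ and $y_my_j\in E(G)$; such a crossing lets me reverse the segment $y_{j+1}\cdots y_m$ and reconnect through the genuine edges $y_jy_m$ and $y_{j+1}y_1$, replacing the Ore-pair $y_1y_m=xy$ by real edges while keeping the same vertex set and lowering $t$. To force a crossing, set $S=\{j:y_1y_{j+1}\in E\}$ and $T=\{j:y_my_j\in E\}$, both inside $\{1,\dots,m-1\}$, and note $|S|=|N(x)\cap V(C')|$ and $|T|=|N(y)\cap V(C')|$. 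The assumption that all common neighbours lie on $C'$ makes $N(x)\setminus V(C')$ and $N(y)\setminus V(C')$ disjoint, so their sizes sum to at most $|V(G)\setminus V(C')|=n-k$; combined with $d(x)+d(y)\ge n$ this gives $|S|+|T|\ge n-(n-k)=m$. As $|S\cup T|\le m-1$, the sets $S$ and $T$ must meet, providing the crossing.

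The step I expect to be the crux is precisely this last case: ruling out that the surplus degree guaranteed by $d(x)+d(y)\ge n$ is entirely ``hidden'' off the cycle. The disjointness observation is exactly what makes the degree-sum hypothesis bite, and the bookkeeping around it is where I would be most careful — keeping $S$ and $T$ inside $\{1,\dots,m-1\}$, checking that any winning index lies in $[2,m-2]$ so the reversal is a genuine cycle on all of $y_1,\dots,y_m$, and confirming that the small-$m$ cases (where no interior crossing index can exist) force the off-cycle common neighbour to be present and so collapse into the insertion case of the previous paragraph.
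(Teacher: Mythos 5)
Your proof is correct: the case split into inserting an off-cycle common neighbour versus finding a crossing index $j\in S\cap T$ via $|S|+|T|\ge m$ and $|S\cup T|\le m-1$ is exactly the standard closure-type argument for this lemma, and your bookkeeping (disjointness of $N(x)\setminus V(C')$ and $N(y)\setminus V(C')$, the range $[2,m-2]$ for the crossing index, and the collapse of small $m$ into the insertion case) is sound. The paper itself gives no proof, quoting the lemma from \cite{Li_Ryjacek_Wang_Zhang}, and your argument is essentially the one used there, so there is nothing further to reconcile.
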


\begin{lemma}\label{le3}
Let $G$ be a non-Hamiltonian graph, $C$ be a longest heavy cycle
(a longest cycle) of $G$, $R$ a component of $G-V(C)$, and
$A=\{v_1,v_2,\ldots,v_k\}$ the set of neighbors of $R$ on $C$.
Let $u\in V(R)$, $v_i,v_j\in A$ and $P$ be a $(v_i,v_j)$-path with
all internal vertices in $R$.
Then there hold\\
$(a)$ $uv_i^-\notin\widetilde{E}(G)$, $uv_i^+\notin \widetilde{E}(G)$;\\
$(b)$ $v_i^-v_j^-\notin\widetilde{E}(G)$, $v_i^+v_j^+\notin \widetilde{E}(G)$;\\
$(c)$ If $v^-_iv_i^+\in \widetilde{E}(G)$, then $v_iv_j^-\notin \widetilde{E}(G)$,
$v_iv_j^+\notin \widetilde{E}(G)$;\\
$(d)$ Let $l\in V(C[v_i,v_j^-])\cap N(v_i)\cap N(v_j^-)$. If $v^-_iv_i^+\in
\widetilde{E}(G)$, then $l^-l^+\notin \widetilde{E}(G)$, $v_il^+\notin \widetilde{E}(G)$;\\
$(e)$ Let $l\in V(C[v_i,v_j^-])\cap N(v_i)$. If $v^-_iv_i^+\in \widetilde{E}(G)$,
then $l^-v_j^-\notin \widetilde{E}(G)$, $v_j^+l^+\notin \widetilde{E}(G)$;\\
$(f)$ Let $l\in V(C[v_i,v_j^-])$. If $v^-_il\in \widetilde{E}(G)$,
then $l^-v_j^-\notin \widetilde{E}(G)$, $l^-v_j^+\notin \widetilde{E}(G)$,
$l^+v_j^+\notin \widetilde{E}(G)$;\\
Furthermore, if $G$ is a 2-connected claw-heavy graph, then\\
$(g)$ $v_i^{-}v_i^{+}\in \widetilde{E}(G)$, $v_j^{-}v_j^{+}\in
\widetilde{E}(G)$;\\
$(h)$ $v^-_iv_i^+\in E(G)$ or $v^-_jv_j^+\in E(G)$.
\end{lemma}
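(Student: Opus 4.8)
The plan is to dispatch (a)--(f) by one uniform device and then read off (g)--(h) from them together with the claw-heavy hypothesis. The common engine is the maximality of $C$: for each of (a)--(f) I would argue by contradiction, assuming the displayed $\widetilde{E}(G)$-edge \emph{does} exist, and then splice it together with the path $P$ (which runs from $v_i$ to $v_j$ with all interior vertices in $R$) and with the $\widetilde{E}(G)$-edges supplied in the hypothesis, so as to build an \emph{o-cycle} $C'$ with $V(C)\cup V(P)\subseteq V(C')$. Since $P$ has at least one interior vertex in $R$, this $C'$ is strictly longer than $C$, and since $V(C)\subseteq V(C')$ it contains every heavy vertex of $G$; hence in either reading of the hypothesis ($C$ a longest heavy cycle, or a longest cycle), applying Lemma \ref{le2} to $C'$ yields a genuine cycle on $V(C')$ that contradicts the maximality of $C$. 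Thus no such edge can exist.

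The prototype is (a): if $uv_i^{+}\in\widetilde{E}(G)$, I would follow $C[v_i^{+},v_i]$ (the arc of $C$ avoiding the edge $v_iv_i^{+}$), then a $(v_i,u)$-path with interior in $R$, and close up with $uv_i^{+}$; this is an o-cycle through $V(C)\cup\{u\}$. The same recipe drives (b)--(f); the only thing that changes is which arcs of $C$ get reversed and which supplied bridges ($v_i^{-}v_i^{+}$, $v_i^{-}l$, the chord edges $v_il$ and $v_j^{-}l$, and so on) are inserted at the junctions. In every case the two conclusions listed are mirror images under reversing the orientation of $C$ (or swapping the roles of $v_i$ and $v_j$), so only one of each pair needs to be carried out explicitly.

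I expect the real work — and the main obstacle — to be the bookkeeping in (d)--(f). There one must lay down the segments of $C$, the bypass/chord edges, and the path $P$ so that the concatenation is a single closed walk visiting every vertex of $C$ exactly once (plus the vertices of $R$ picked up along $P$), with every consecutive pair genuinely lying in $\widetilde{E}(G)$. The forbidden edges already pinned down in (a)--(c) are precisely what make the naive splices fail to close, so the orderings have to be chosen with care; verifying full coverage of $V(C)$ and the legitimacy of each junction edge is where essentially all of the case checking lives.

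Parts (g) and (h) then come quickly. For (g), pick a neighbor $r\in V(R)$ of $v_i$, which exists since $v_i\in A$; by (a) the pairs $\{r,v_i^{-}\}$ and $\{r,v_i^{+}\}$ lie outside $\widetilde{E}(G)$, so in particular they are non-edges with degree sum below $n$. If $v_i^{-}v_i^{+}\in E(G)$ we are done; otherwise $\{v_i;r,v_i^{-},v_i^{+}\}$ is an induced claw, and claw-heaviness forces some pair of its end vertices into $\widetilde{E}(G)$, which — the two pairs through $r$ being excluded — can only be $\{v_i^{-},v_i^{+}\}$. Either way $v_i^{-}v_i^{+}\in\widetilde{E}(G)$, and symmetrically $v_j^{-}v_j^{+}\in\widetilde{E}(G)$. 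For (h), suppose both $v_i^{-}v_i^{+}$ and $v_j^{-}v_j^{+}$ were non-edges; then by (g) each has degree sum at least $n$, so $d(v_i^{-})+d(v_i^{+})+d(v_j^{-})+d(v_j^{+})\ge 2n$. But (b) gives $v_i^{+}v_j^{+}\notin\widetilde{E}(G)$ and $v_i^{-}v_j^{-}\notin\widetilde{E}(G)$, that is $d(v_i^{+})+d(v_j^{+})<n$ and $d(v_i^{-})+d(v_j^{-})<n$; these bound the same four-term total by strictly less than $2n$, a contradiction. Hence at least one of $v_i^{-}v_i^{+}$, $v_j^{-}v_j^{+}$ is a genuine edge, which is (h). The noteworthy point here is that (h) is a degree-sum count riding on (b) and (g), rather than yet another cycle surgery.
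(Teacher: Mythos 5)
Your plan coincides with the paper's proof: parts (a)--(f) are established there by exactly the o-cycle surgery you describe (splice the offending $\widetilde{E}(G)$-edge with $P$ and suitably reoriented arcs of $C$, then invoke Lemma~\ref{le2} against the maximality of $C$; the paper writes out the explicit concatenations for (d)--(f) and defers (a)--(c) to Claims~1 and~3 of \cite[Theorem~8]{Li_Ryjacek_Wang_Zhang}), while your claw-heavy argument for (g) and your degree-sum regrouping via (b) for (h) are precisely the arguments the paper cites from Claims~2 and~4 of that same reference. The only piece you leave unexecuted is the explicit arc bookkeeping in (d)--(f), which you correctly flag as the routine but necessary part of the verification.
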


\begin{proof}
$(a),(b),(c)$ The proof of $(a)$ is similar as the proof of Claim 1, and the proof
of $(b)$, $(c)$ is similar as the proof of Claim 3, in \cite[Theorem 8]{Li_Ryjacek_Wang_Zhang},
respectively.

$(d)$ Suppose that $l^-l^+\in \widetilde{E}(G)$. Then $C'=PC[v_j,v_i^-]
v_i^-v_i^+C[v_i^+,l^-]l^-l^+C[l^+,v_j^-]$ $v_j^-lv_i$ is an \emph{o}-cycle
such that $V(C)\subset V(C')$. By Lemma
\ref{le2}, there is a longer cycle $C''$ containing all vertices in $C$, contradicting the
choice of $C$. Suppose that $v_il^+\in \widetilde{E}(G)$. Then
$C'=PC[v_j,v_i^-]v_i^-v_i^+C[v_i^+,l]lv_j^-\overleftarrow{C}[v_j^-,l^+]l^+v_i$
is an \emph{o}-cycle such that $V(C)\subset V(C')$, a contradiction.

$(e)$ Suppose that $l^-v_j^-\in \widetilde{E}(G)$. Then $C'=\overleftarrow{P}
v_ilC[l,v_j^-]v_j^-l^-\overleftarrow{C}[l^-,v_i^+]v_i^+v_i^-\overleftarrow{C}
[v_i^-,v_j]$ is an \emph{o}-cycle such that $V(C)\subset V(C')$,
a contradiction. Suppose that $v_j^+l^+\in \widetilde{E}(G)$.
Then $C'=P\overleftarrow{C}[v_j,l^+]l^+v_j^+C[v_j^+,v_i^-]v_i^-v_i^+
C[v_i^+,l]lv_i$ is an \emph{o}-cycle such that $V(C)\subset V(C')$,
a contradiction.

$(f)$ Suppose that $l^-v_j^-\in \widetilde{E}(G)$. Then $C'=PC[v_j,
v_i^-]v_i^-lC[l,v_j^-]v_j^-l^-\overleftarrow{C}[l^-,v_i]$ is an
\emph{o}-cycle such that $V(C)\subset V(C')$, a contradiction.
Suppose that $l^-v_j^+\in \widetilde{E}(G)$. Then $C'=\overleftarrow{P}
C[v_i,l^-]l^-v_j^+C[v_j^+,v_i^-]v_i^-lC[l,v_j]$ is an \emph{o}-cycle
such that $V(C)\subset V(C')$, a contradiction. Suppose that $l^+v_j^+
\in \widetilde{E}(G)$. Then $C'=P\overleftarrow{C}[v_j,l^+]l^+v_j^+
C[v_j^+,v_i^-]v_i^-l\overleftarrow{C}[l,v_i]$ is an \emph{o}-cycle
such that $V(C)\subset V(C')$, a contradiction.

$(g)$, $(h)$ The proof of $(g)$ and $(h)$ is similar as the proof of Claim 2, 4 in
\cite[Theorem 8]{Li_Ryjacek_Wang_Zhang}, respectively.
\end{proof}

\noindent{}
{\bf Proof of Theorem \ref{th3}}

Let $G$ be a graph satisfying the condition of Theorem \ref{th3}.
Let $C$ be a longest cycle of $G$ and assign an orientation to
it. Suppose $G$ is not Hamiltonian. Then $V(G)\backslash V(C)\ne
\emptyset$. Let $R$ be a component of $G-C$, and $A=\{v_1,v_2,
\ldots,v_k\}$ be the set of neighbors of $R$ on $C$. Since $G$
is 2-connected, there exists a $(v_i,v_j)$-path $P=v_iu_1\ldots
u_rv_j$ with all internal vertices in $R$, and $v_i,v_j\in A$.
Choose $P$ such that:\\
$(1)$ $|V(C(v_i,v_j))|$ is as small as possible;\\
$(2)$ $|V(P)|$ is as small as possible subject to (1).

By the choice of $P$, the first claim is obvious.

\begin{claim}\label{c1}
For any two vertices $u_s,u_t\in V(P)\backslash \{v_i,v_j\}$
and $v\in V(C(v_i,v_j))$, there hold
$u_su_t\notin E(G)$ if $d_P(u_s,u_t)\geq 2$, $u_sv\notin E(G)$,
$v_iu_s\notin E(G)$ and $v_ju_s\notin E(G)$.
\end{claim}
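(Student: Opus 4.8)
The plan is to derive every part of Claim \ref{c1} purely from the minimality of $P$ encoded in conditions (1) and (2); notice that neither the claw-heavy hypothesis nor almost distance-heredity is needed at this stage. In each case I would assume the offending edge exists and splice it into $P$ to produce a new path, between two neighbours of $R$ on $C$, that is strictly better in the sense of (1) or (2), contradicting the choice of $P$. The only facts I rely on are that the internal vertices $u_1,\ldots,u_r$ all lie in $R$ and that $v_i,v_j\in A$.

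First I would dispose of the internal chords and the endpoint edges, all of which contradict the length-minimality (2). Suppose $u_su_t\in E(G)$ with $s<t$ and $d_P(u_s,u_t)\geq 2$. Then $P'=v_iu_1\cdots u_su_t\cdots u_rv_j$ is again a $(v_i,v_j)$-path whose internal vertices all lie in $R$, but it omits the nonempty set of vertices strictly between $u_s$ and $u_t$ on $P$, so $|V(P')|<|V(P)|$, contradicting (2). The edges $v_iu_s$ (for $s\geq 2$) and $v_ju_s$ (for $s\leq r-1$) are handled identically: $v_iu_s\in E(G)$ yields the shorter $(v_i,v_j)$-path $v_iu_su_{s+1}\cdots u_rv_j$, and $v_ju_s\in E(G)$ the shorter path $v_iu_1\cdots u_sv_j$, each contradicting (2). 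The remaining cases $v_iu_1$ and $v_ju_r$ are edges of $P$ itself and hence excluded from the assertion.

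Next I would treat the edges from the interior of $P$ to the interior of the arc, which contradict the arc-minimality (1). Suppose $u_sv\in E(G)$ for some $v\in V(C(v_i,v_j))$. Since $u_s\in R$, the vertex $v$ is a neighbour of $R$ on $C$, hence $v\in A$. Then $P''=v_iu_1\cdots u_sv$ is a path from $v_i$ to $v$ with all internal vertices in $R$, and because $v$ lies strictly between $v_i$ and $v_j$ on $C$ we have $V(C(v_i,v))\subsetneq V(C(v_i,v_j))$, so $|V(C(v_i,v))|<|V(C(v_i,v_j))|$. As $v_i,v\in A$, this contradicts the choice (1) of $P$.

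The arguments are routine once the correct substitute path is written down, so I do not expect a genuine obstacle; the only points demanding care are bookkeeping ones. I must check that each substitute path is simple and keeps all its internal vertices inside $R$ -- both hold automatically, since the replacement only deletes vertices of $P$ (all of which lie in $R$) or appends a single $C$-vertex as a new endpoint. I must also apply the correct minimality criterion: the chord and endpoint edges preserve the pair $\{v_i,v_j\}$ and so must be defeated by (2), whereas an edge reaching a vertex $v$ of the open arc changes the pair and is defeated by (1), which sits above (2) in the lexicographic order. Finally, the genuine path edges $v_iu_1$, $v_ju_r$ and the consecutive pairs $u_su_{s+1}$ are precisely the cases ruled out by the side conditions $d_P(u_s,u_t)\geq 2$ and $u_s\neq u_1,u_r$, so the statement is consistent as written.
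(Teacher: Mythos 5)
Your proposal is correct and matches the paper's intent exactly: the paper dismisses this claim with ``By the choice of $P$, the first claim is obvious,'' and your argument is precisely the routine splicing against conditions (1) and (2) that this remark presupposes. All the substitute paths you construct are valid (simple, internal vertices in $R$, endpoints in $A$), and you correctly sort which edges are defeated by the arc-minimality (1) versus the length-minimality (2) and which are genuine edges of $P$ excluded by the hypotheses.
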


\begin{claim}\label{c2}
There is no \emph{o}-cycle $C'$ in $G$ such that $V(C)\subset
 V(C')$.
\end{claim}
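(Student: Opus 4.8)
The plan is to argue by contradiction and reduce the statement immediately to the maximality of $C$ by way of Lemma~\ref{le2}. The key observation is that an \emph{o}-cycle is a weaker object than a genuine cycle: its consecutive pairs need only lie in $\widetilde{E}(G)$ rather than in $E(G)$. Lemma~\ref{le2} is precisely the tool that converts an \emph{o}-cycle into an honest cycle covering at least as many vertices, so any \emph{o}-cycle whose vertex set strictly contains $V(C)$ would, after passing through Lemma~\ref{le2}, contradict the choice of $C$ as a longest cycle of $G$.

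Concretely, I would suppose for contradiction that there is an \emph{o}-cycle $C'$ of $G$ with $V(C)\subset V(C')$. Applying Lemma~\ref{le2} to the \emph{o}-cycle $C'$ yields a genuine cycle $C^{*}$ of $G$ satisfying $V(C')\subseteq V(C^{*})$. Chaining the two inclusions gives $V(C)\subset V(C')\subseteq V(C^{*})$, and since the first containment is proper we obtain $|V(C^{*})|\geq|V(C')|>|V(C)|$. Hence $C^{*}$ is a cycle of $G$ strictly longer than $C$, contradicting the fact that $C$ was chosen to be a longest cycle of $G$. This contradiction establishes the claim.

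I expect essentially no difficulty beyond correctly invoking Lemma~\ref{le2}, since the entire combinatorial content has already been packaged into that lemma. The only point requiring a little care is to keep the containment $V(C)\subset V(C')$ strict throughout, so that the resulting cycle $C^{*}$ is guaranteed to be \emph{strictly} longer than $C$ and not merely of the same length. This is the cleanest of the claims in the proof; its role is mainly as a convenient reformulation, allowing the later arguments to produce contradictions simply by exhibiting an \emph{o}-cycle that spans $V(C)$ together with at least one additional vertex.
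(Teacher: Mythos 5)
Your proposal is correct and follows exactly the paper's own argument: assume such an \emph{o}-cycle $C'$ exists, apply Lemma~\ref{le2} to obtain a genuine cycle containing $V(C')$, and note that the strict containment $V(C)\subset V(C')$ forces this cycle to be longer than $C$, contradicting the choice of $C$ as a longest cycle. No differences worth noting.
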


\begin{proof}
Otherwise, $C'$ is an \emph{o}-cycle such that $V(C)\subset V(C')$.
By Lemma \ref{le2}, there exists a cycle containing all vertices
in $C'$ and longer than $C$, contradicting the choice of $C$.
\end{proof}

By Lemma \ref{le3} ($h$), without loss of generality, assume
that $v^-_iv_i^+\in E(G)$.

\begin{claim}
$r=1$, that is, $V(P)=\{v_i,u_1,v_j\}$.
\end{claim}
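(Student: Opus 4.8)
The plan is to assume $r\ge 2$ and reach a contradiction, so that $r=1$. We may assume $v_i$ and $v_j$ are non-consecutive on $C$ (otherwise replacing the connecting edge of $C$ by $P$ already produces a cycle longer than $C$), so that $v_i^-,v_i^+,v_j^-,v_j^+$ are well defined and the two arcs $C(v_i,v_j)$ and $C(v_j,v_i)$ are nonempty. I would work inside the connected induced subgraph $H=G[\{v_i^-\}\cup V(C(v_i,v_j))\cup\{v_j,u_1,\ldots,u_r\}]$ and exploit the almost distance-hereditary hypothesis on pairs of vertices that are at distance $2$ in $G$. The first thing to record is that the internal path vertices $u_1,\ldots,u_r$ form a pendant induced path in $H$ attached to the rest of $H$ only through the edge $u_rv_j$: indeed $u_su_t\notin E(G)$ whenever $d_P(u_s,u_t)\ge 2$ and $u_sv\notin E(G)$ for every $v\in V(C(v_i,v_j))$ by Claim~\ref{c1}, while $u_sv_i^-\notin\widetilde{E}(G)$ by Lemma~\ref{le3}$(a)$ and $u_sv_j\notin E(G)$ for $s<r$ again by Claim~\ref{c1}. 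Hence every $(u_1,v_i^-)$-path in $H$ first runs along $u_1u_2\cdots u_rv_j$ and then continues inside $H'=G[\{v_i^-\}\cup V(C(v_i,v_j))\cup\{v_j\}]$, which gives $d_H(u_1,v_i^-)=r+d_{H'}(v_j,v_i^-)\ge r+1$.

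The first key step is a distance-$2$ estimate. Since $u_1v_i\in E(G)$ and $v_iv_i^-\in E(G)$, while $u_1v_i^-\notin\widetilde{E}(G)$ by Lemma~\ref{le3}$(a)$, I get $d_G(u_1,v_i^-)=2$. The almost distance-hereditary property then forces $d_H(u_1,v_i^-)\le d_G(u_1,v_i^-)+1=3$, and together with the inequality above this yields $r+1\le 3$, i.e. $r\le 2$. This disposes of every case $r\ge 3$ in one stroke, so the whole difficulty concentrates in the boundary value $r=2$.

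The second step treats $r=2$. Now $d_H(u_1,v_i^-)=2+d_{H'}(v_j,v_i^-)\le 3$ forces $d_{H'}(v_j,v_i^-)=1$, i.e. the chord $v_jv_i^-\in E(G)$. A direct attempt to reroute $C$ through $P$ and this chord into a longer ordinary cycle is blocked, because $v_j^-$ and $v_j^+$ each have $v_j$ as their only neighbour off the two arcs, so $v_j$ would be forced to carry three cycle-edges; this degree obstruction is, I expect, the main obstacle, and circumventing it is where claw-heaviness enters. The set $\{v_j;v_j^-,v_j^+,u_2\}$ is a claw unless $v_j^-v_j^+\in E(G)$, since $u_2v_j^-,u_2v_j^+\notin\widetilde{E}(G)$ by Lemma~\ref{le3}$(a)$ (these also guarantee pairwise non-adjacency of the end vertices). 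Because those two pairs avoid $\widetilde{E}(G)$, the heavy pair of this claw must be $\{v_j^-,v_j^+\}$, so in all cases $v_j^-v_j^+\in\widetilde{E}(G)$.

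Finally I would use the Ore-edge $v_j^-v_j^+$ to bypass $v_j$ on the cycle side while $P$ covers $v_j$ on the other side. Explicitly, I claim that
\[
C'=v_i\,u_1\,u_2\,v_j\,v_i^-\,\overleftarrow{C}[v_i^-,v_j^+]\,v_j^-\,\overleftarrow{C}[v_j^-,v_i^+]\,v_i
\]
is an \emph{o}-cycle: every consecutive pair lies in $E(G)$ except $v_j^+v_j^-$, which lies in $\widetilde{E}(G)$, and $C'$ visits each vertex of $V(C)\cup\{u_1,u_2\}$ exactly once, so $V(C)\subset V(C')$. This contradicts Claim~\ref{c2}. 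Hence $r=2$ is impossible, and therefore $r=1$. The essential point, and the step I expect to be hardest, is exactly this closing of the case $r=2$: the distance argument alone only pins down the single chord $v_jv_i^-$, and one cannot build a longer cycle from it directly, so the extra idea of manufacturing the Ore-edge $v_j^-v_j^+$ from claw-heaviness — which lets the \emph{o}-cycle skip $v_j$ while $P$ reinserts it — is what converts the forced chord into the contradiction.
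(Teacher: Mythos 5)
Your argument is correct and rests on the same engine as the paper's proof --- applying the almost-distance-hereditary property to a distance-$2$ pair that straddles an attachment vertex of $P$ --- but your choice of pair makes the argument longer. The paper takes $H=G[V(P)\cup V(C[v_i,v_j])]-v_j$ and the pair $(v_j^-,u_r)$: there the leg from $v_j^-$ to $v_i$ already has length at least $2$ because $v_iv_j^-\notin E(G)$ (Lemma~\ref{le3}$(c)$, whose hypothesis is supplied by $(g)$), and the leg from $v_i$ to $u_r$ along $P$ has length $r\ge 2$, so $d_H(v_j^-,u_r)\ge 4>d_G(v_j^-,u_r)+1$ disposes of every $r\ge 2$ at once. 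Your pair $(u_1,v_i^-)$ bounds the second leg below only by $1$, which is why you are left with the residual case $r=2$ and must then extract the chord $v_jv_i^-$, re-derive the Ore-edge $v_j^-v_j^+$ (this is exactly Lemma~\ref{le3}$(g)$), and build the closing \emph{o}-cycle. That construction is valid --- your $C'$ does traverse $V(C)\cup\{u_1,u_2\}$ once each with every consecutive pair in $\widetilde{E}(G)$ --- but it is also avoidable: the forced chord $v_jv_i^-\in E(G)$ already contradicts Lemma~\ref{le3}$(c)$ applied with the roles of $v_i$ and $v_j$ exchanged (hypothesis again from $(g)$), so you are in effect re-proving a special case of that lemma inline. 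Two points you should make explicit rather than leave implicit: the almost-distance-hereditary property applies only to \emph{connected} induced subgraphs, and since your $H$ omits $v_i$, the connectivity of $H$ (specifically that $v_i^-$ is joined to the rest) rests on the standing assumption $v_i^-v_i^+\in E(G)$ made just before this claim; and the identity $d_H(u_1,v_i^-)=r+d_{H'}(v_j,v_i^-)$ uses that $v_i^-$ has no neighbour among $u_1,\dots,u_r$, which is Lemma~\ref{le3}$(a)$ as you cite. Neither is a gap, but both deserve a sentence.
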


\begin{proof}
Suppose that $r\geq 2$. Consider $H=G[V(P)\cup V(C[v_i,v_j])]-v_j$.
By Lemma \ref{le3} $(g)$, $v_i^-v_i^+\in \widetilde{E}(G)$. Since
$v_i^-v_i^+\in \widetilde{E}(G)$, $v_iv_j^-\notin E(G)$ by Lemma \ref{le3} $(c)$.
Thus $d_H(v_j^-,v_i)\geq 2$. By the choice of $P$, we have
$d_P(v_i,u_r)\geq 2$. Now by Claim \ref{c1}, $d_H(v_j^-,u_r)=
d_H(v_j^-,v_i)+d_P(v_i,u_r)\geq 4$, which yields a contradiction
to the fact $G$ is almost distance-hereditary and $d_G(v_j^-,u_r)=2$.
Hence $V(P)=\{v_i,u_1,v_j\}$.
\end{proof}

\begin{claim}\label{c4}
$|V(C[v_i,v_j])|\geq 5$.
\end{claim}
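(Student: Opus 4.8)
The plan is to argue by contradiction: I assume $|V(C[v_i,v_j])|\le 4$ and derive a contradiction in each of the few possible subcases. Since $v_i$ and $v_j$ are distinct (they are joined by the path $P$ through $R$), we have $|V(C[v_i,v_j])|\ge 2$, so only the values $2,3,4$ must be excluded. Throughout I use freely that $r=1$ (so $P=v_iu_1v_j$), that $v_i^-v_i^+\in E(G)$, and that by Lemma \ref{le3}$(g)$ both $v_i^-v_i^+$ and $v_j^-v_j^+$ lie in $\widetilde{E}(G)$.

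First I would dispose of the two ``short'' cases by exhibiting a cycle strictly longer than $C$. If $|V(C[v_i,v_j])|=2$ then $v_j=v_i^+$, so $v_iv_j\in E(C)$, and replacing the edge $v_iv_j$ by the path $v_iu_1v_j$ yields a cycle on $V(C)\cup\{u_1\}$, contradicting the maximality of $C$. If $|V(C[v_i,v_j])|=3$, write $w=v_i^+=v_j^-$; using the edge $v_i^-v_i^+=v_i^-w$ I reroute $C$ as
$$v_i\,u_1\,v_j\,C[v_j,v_i^-]\,v_i^-\,w\,v_i,$$
which again visits every vertex of $C$ together with $u_1$ and so is longer than $C$, a contradiction. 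Degenerate situations (e.g. when $C$ itself is short) only shorten the rerouted arcs and cause no trouble; alternatively, this case can be folded into Lemma \ref{le3}$(e)$ by taking $l=v_j^-$.

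The remaining, and genuinely delicate, case is $|V(C[v_i,v_j])|=4$; here the naive induced subgraphs used elsewhere have their distances exactly meeting the almost distance-hereditary bound, so that property alone does not suffice and one must invoke the finer rotation lemma. In this case $v_i^+\ne v_j^-$ while $(v_i^+)^+=v_j^-$. I would apply Lemma \ref{le3}$(e)$ with $l=v_i^+$: this is legitimate because $v_i^+\in N(v_i)\cap V(C[v_i,v_j^-])$ and $v_i^-v_i^+\in\widetilde{E}(G)$. Its conclusion $v_j^+l^+\notin\widetilde{E}(G)$ reads $v_j^+v_j^-\notin\widetilde{E}(G)$, directly contradicting Lemma \ref{le3}$(g)$, which asserts $v_j^-v_j^+\in\widetilde{E}(G)$. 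This contradiction excludes the last case and completes the proof that $|V(C[v_i,v_j])|\ge 5$.

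I expect the length-$4$ case to be the main obstacle: unlike the shorter arcs it yields neither a longer cycle nor an immediate almost distance-hereditary violation, and the whole point is to extract a forbidden $\widetilde{E}(G)$-edge and play it against Lemma \ref{le3}$(g)$. A secondary point requiring care is the bookkeeping in the rerouted (o-)cycles, namely checking that each consecutive pair lies in $\widetilde{E}(G)$ and that the resulting closed walk visits the vertices of $C$ (together with $u_1$) without repetition, so that Lemma \ref{le2} or the maximality of $C$ truly applies; the degenerate subcases in which some arc collapses to a single vertex must be noted but are harmless.
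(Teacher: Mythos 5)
Your proof is correct and takes essentially the same route as the paper: the paper disposes of the cases $|V(C[v_i,v_j])|\in\{3,4\}$ by exhibiting o-cycles through $u_1$ covering $V(C)$, and the o-cycle hidden in the proof of Lemma~\ref{le3}$(e)$ with $l=v_i^+$ (namely $v_iu_1v_jv_j^-v_j^+C[v_j^+,v_i^-]v_i^-v_i^+v_i$) is exactly the one the paper writes down for the length-$4$ case, while your length-$2$ and length-$3$ reroutings match in spirit (the length-$2$ case is in fact already excluded by Lemma~\ref{le3}$(a)$). The differences are purely presentational.
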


\begin{proof}
Suppose $|V(C[v_i,v_j])|=4$ or $|V(C[v_i,v_j])|=3$. This means
$C[v_i,v_j]=v_iv^+_iv^-_jv_j$ or $C[v_i,v_j]=v_iv^+_iv_j$.
Let $C'=v_iu_1v_jv_j^{-}v_j^{+}C[v_j^{+},v_i^{-}]v_i^{-}v_i^{+}
v_i$ or $C'=v_iu_1v_jv_j^{-}v_j^{+}C[v_j^{+},v_i]$. Then $C'$
is an \emph{o}-cycle such that $V(C)\subset
V(C')$, contradicting Claim \ref{c2}.
\end{proof}

Let $H=G[\{u_1,v_i^-\}\cup V(C[v_i,v_j])]-v_i$. Since $d_G(v_i^-,u_1)=2$
and $G$ is almost distance-hereditary, $d_H(v_i^-,u_1)\leq 3$.
By Lemma \ref{le3} $(c)$ and Claim \ref{c1}, we have $v_i^-v_j\notin E(G)$
and $u_1v_s\notin E(G)$, where $v_s\in C[v_i^+,v_j^-]$. It follows that
$d_H(v_i^-,u_1)= 3$ and $d_H(v_i^-,v_j)=2$. By Lemma \ref{le3} $(b)$
and $(c)$, $v_i^-v_j^-\notin E(G)$ and $v_i^+v_j\notin E(G)$.
Thus there exits a vertex $w\in C(v_i^{+},v_j^-)$ such that $v_i^-w\in E(G)$
and $wv_j\in E(G)$. Note that $w$ is well-defined by Claim \ref{c4}.

\begin{claim}\label{c5}
$v_i^-w^+\in \widetilde{E}(G)$.
\end{claim}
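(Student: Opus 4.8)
The plan is to prove Claim \ref{c5} by contradiction: assume $v_i^-w^+\notin \widetilde{E}(G)$ and derive a contradiction with the almost distance-hereditary property. The natural vertices to examine are $w^+$ and some vertex whose $G$-distance to $w^+$ is $2$, since we want to build a connected induced subgraph $H$ in which the distance blows up to at least $4$. Given the setup just before the claim, the vertices $u_1$, $v_i^-$, $w$ and $v_j$ are already tightly controlled: we know $v_i^-w\in E(G)$, $wv_j\in E(G)$, and the edge-absence facts $v_i^-v_j\notin E(G)$, $u_1v_s\notin E(G)$ for $v_s\in C[v_i^+,v_j^-]$, together with the restrictions in Lemma \ref{le3}. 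The first step is therefore to identify a pair $x,y$ with $d_G(x,y)=2$ (so that the almost-distance-hereditary hypothesis forces $d_H(x,y)\le 3$ inside a suitable connected induced subgraph $H$), and then show that the assumed non-adjacencies force $d_H(x,y)\ge 4$.

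Concretely, I would take the induced subgraph $H'=G[\{u_1,v_i^-\}\cup V(C[v_i,v_j])]-v_i$ that already appears in the text, possibly augmented by $w^+$ or by replacing an endpoint, and analyze the shortest path from $w^+$ to $u_1$ (or from $w^+$ to $v_i^-$). Since $u_1$ is adjacent in $G$ only to $v_i$, $v_j$ and vertices of $R$, its neighbors inside $H'$ are extremely limited; combined with the assumption $v_i^-w^+\notin \widetilde{E}(G)$ and the Lemma \ref{le3} constraints on $v_j^\pm$ and on $w$, any path in $H'$ from $w^+$ toward $u_1$ should be forced to detour, pushing the $H'$-distance up to $4$ while the $G$-distance stays at $2$ (realized by a common neighbor outside $H'$, e.g. $v_i$ or $v_j$). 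The second step is to verify that $w^+$ is genuinely available, i.e.\ $w^+\in C(v_i^+,v_j^-)$ or $w^+=v_j^-$ and is not collapsed with an already-excluded vertex; Claim \ref{c4} guaranteeing $|V(C[v_i,v_j])|\ge 5$ is exactly what provides room for $w^+$ to be a distinct internal vertex.

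The main obstacle I anticipate is bookkeeping the adjacency structure at the splice points $v_i^+, v_j^-, w, w^+$ so that the forced detour is airtight: one must rule out every short connection in $H$, using parts $(b)$ through $(f)$ of Lemma \ref{le3} to kill candidate edges such as $v_i^-v_j^-$, $v_i^+v_j$, $u_1v_s$, and any edge that would let a path of length $\le 3$ slip through. In particular, because $v_i^-v_i^+\in E(G)$ is assumed, parts $(c),(d),(e)$ become applicable and eliminate several edges incident to $v_i$ and to $w$; I would invoke these systematically. A secondary subtlety is the possibility $w^+=v_j^-$, which may need to be treated as a special case since then $w^+$ inherits the stronger constraints available for $v_j^-$, but this should only make the argument easier rather than harder.

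If the direct distance argument resists, the fallback is to show that $v_i^-w^+\notin\widetilde{E}(G)$ would permit constructing an \emph{o}-cycle $C'$ with $V(C)\subset V(C')$, contradicting Claim \ref{c2}; for instance, one would try to reroute $C$ through the path $v_iu_1v_j$ and the chords $v_i^-w$, $wv_j$, using $v_i^-v_i^+\in E(G)$ to reconnect the two arcs of $C$, and insert $w^+$ via the hypothesized edge. Either route reduces to the same core fact, that the only way to reach $w^+$ cheaply inside the relevant subgraph is through $v_i^-$, so denying that edge breaks Hamiltonicity-extension; I expect the distance argument to be the cleaner of the two and would present that as the primary proof.
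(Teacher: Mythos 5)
Your proposal misses the key idea and pursues an approach that cannot succeed. The claim asserts $v_i^-w^+\in \widetilde{E}(G)$, i.e.\ \emph{either} $v_i^-w^+\in E(G)$ \emph{or} $d(v_i^-)+d(w^+)\geq n$. A distance argument of the type you propose can only ever certify an actual edge, never the degree-sum alternative, and there is no reason the actual edge must be present here; so the almost-distance-hereditary route is structurally the wrong tool for this claim. Moreover, even the edge version cannot be forced your way: since $v_i^-w\in E(G)$ and $ww^+\in E(G)$, any connected induced subgraph containing $w$ already gives $d_H(v_i^-,w^+)\leq 2$, so no distance blow-up between $v_i^-$ and $w^+$ is available, and the subgraph $H'=G[\{u_1,v_i^-\}\cup V(C[v_i,v_j])]-v_i$ you name contains $w$. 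Your fallback is also backwards: the hypothesis for contradiction is that $v_i^-w^+$ is a \emph{non}-edge of $\widetilde{E}(G)$, and one cannot insert a vertex into an \emph{o}-cycle "via the hypothesized edge'' when the hypothesis is precisely that this edge is absent.

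The paper's actual proof is a short claw-heavy argument, and this claim is exactly where the claw-heavy hypothesis enters. Suppose $v_i^-w^+\notin \widetilde{E}(G)$. By Lemma \ref{le3}~$(e)$ and symmetry, $v_jw^+\notin \widetilde{E}(G)$, and by Lemma \ref{le3}~$(c)$, $v_jv_i^-\notin \widetilde{E}(G)$. Since $w$ is adjacent to $w^+$, $v_j$ and $v_i^-$ by the choice of $w$, the set $\{w,w^+,v_j,v_i^-\}$ induces a claw in which every pair of end vertices has degree sum less than $n$, i.e.\ an \emph{o}-light claw, contradicting the claw-heaviness of $G$. Your write-up never invokes the claw-heavy condition at all, which is the missing idea.
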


\begin{proof}
Suppose that $v_i^-w^+\notin \widetilde{E}(G)$. By Lemma \ref{le3} $(e)$
and symmetry, we have $v_jw^+\notin \widetilde{E}(G)$. Note that
$v_jv_i^-\notin \widetilde{E}(G)$ by Lemma \ref{le3} ($c$).
Thus $\{w,w^+,v_j,v_i^-\}$ induces an $o$-light claw in $G$,
a contradiction.
\end{proof}

\begin{claim}\label{c6}
$wv_j^+\in \widetilde{E}(G)$.
\end{claim}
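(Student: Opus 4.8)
The plan is to regard Claim 6 as the $v_j$-sided counterpart of Claim \ref{c5}, and to notice that the two Ore-edges the two claims produce are exactly what is needed to contradict the maximality of $C$. Once $v_i^-w^+\in\widetilde{E}(G)$ (Claim \ref{c5}) and $wv_j^+\in\widetilde{E}(G)$ (Claim 6) are both in hand, then, using also $v_i^-v_i^+\in E(G)$ and the path $v_iu_1v_j$, I can form
\[
C'=C[v_i,w]\,v_j^+\,C[v_j^+,v_i^-]\,w^+\,C[w^+,v_j]\,u_1\,v_i,
\]
which is an Ore-cycle with $V(C)\subset V(C')$, contradicting Claim \ref{c2}. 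Thus $wv_j^+$ is the one chord still missing, and establishing Claim 6 will finish the proof of Theorem \ref{th3}.

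To prove Claim 6 I would proceed as in Claim \ref{c5}, through the claw-heavy hypothesis. Assume $wv_j^+\notin\widetilde{E}(G)$, for otherwise there is nothing to prove, and consider the subgraph induced by $v_j$ together with its neighbours $v_j^+,w,u_1$. It is an induced claw with centre $v_j$: indeed $wu_1\notin E(G)$ by Claim \ref{c1}, $v_j^+u_1\notin E(G)$ by Lemma \ref{le3}$(a)$ with the roles of $v_i$ and $v_j$ interchanged, and $v_j^+w\notin E(G)$ by assumption. Claw-heaviness now forces some pair of end-vertices to have degree sum at least $n$. The assumption rules out the pair $(v_j^+,w)$, and Lemma \ref{le3}$(a)$ (interchanged) rules out $(v_j^+,u_1)$ because $u_1v_j^+\notin\widetilde{E}(G)$; hence the only surviving possibility is $d(w)+d(u_1)\ge n$, that is, $wu_1\in\widetilde{E}(G)$.

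The main obstacle, therefore, is to dispose of the residual case $wu_1\in\widetilde{E}(G)$. It is tempting to splice the Ore-edge $wu_1$ into $C$ in place of $wv_j^+$, but one checks that this does not close: every such attempt yields only a Hamilton path of $C+u_1$ whose two ends form a pair such as $(u_1,v_j^-)$ or $(v_i,v_j^+)$, and these lie outside $\widetilde{E}(G)$ by Lemma \ref{le3}$(a)$ or $(c)$, so no longer Ore-cycle results. I expect instead to eliminate this case with the almost-distance-hereditary hypothesis, the same device that produced $w$: since $wv_j,v_ju_1\in E(G)$ give $d_G(w,u_1)=2$, deleting $v_j$ leaves a connected induced subgraph $H$ with $d_H(w,u_1)\le 3$, and by trimming $H$ so that $u_1$ retains a single neighbour there one should force a short $w$–$u_1$ path, hence an additional chord. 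Choosing $H$ so that this forced chord, together with $wu_1\in\widetilde{E}(G)$, contradicts the choice of $C$ is the delicate point I would have to nail down.
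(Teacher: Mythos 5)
Your reduction is correct as far as it goes: the claw $\{v_j;u_1,w,v_j^+\}$ is induced (using $wu_1\notin E(G)$, $u_1v_j^+\notin E(G)$ by Lemma \ref{le3}$(a)$, and the assumption $wv_j^+\notin\widetilde{E}(G)$), and claw-heaviness leaves only the pair $(w,u_1)$, so $wu_1\in\widetilde{E}(G)$. But the residual case you cannot dispose of is precisely the substance of the paper's proof, so this is a genuine gap rather than a difference of route. The missing idea is a \emph{second} claw-heavy argument, not an almost-distance-hereditary one: consider $\{w;w^-,v_j,v_i^-\}$ with centre $w$ (legitimate since $v_i^-w,wv_j\in E(G)$ by the choice of $w$). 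Lemma \ref{le3}$(e)$, applied with the roles of $v_i$ and $v_j$ interchanged (using $v_j^-v_j^+\in\widetilde{E}(G)$ and $wv_j\in E(G)$), gives $w^-v_i^-\notin\widetilde{E}(G)$, and Lemma \ref{le3}$(c)$ gives $v_jv_i^-\notin\widetilde{E}(G)$; hence claw-heaviness forces $w^-v_j\in\widetilde{E}(G)$. This extra Ore-edge is exactly what makes the splicing of $u_1$ close up: if $u_1w\in\widetilde{E}(G)$, then
\[
C'=u_1wC[w,v_j^-]\,v_j^-v_j^+\,C[v_j^+,w^-]\,w^-v_j\,u_1
\]
is an Ore-cycle through $V(C)\cup\{u_1\}$ (it uses the three Ore-edges $u_1w$, $v_j^-v_j^+$ and $w^-v_j$), contradicting the earlier claim that no Ore-cycle properly contains $V(C)$, via Lemma \ref{le2}. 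Your search for a re-routing failed because you restricted yourself to the chords already in hand; the detour through $w^-v_j$ only becomes available after the auxiliary claw argument.

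The alternative you sketch for the residual case has no evident way to succeed: $u_1v_i$, $v_iv_i^-$ and $v_i^-w$ are all edges of $G$, so any connected induced subgraph containing these four vertices already realizes $d_H(u_1,w)\le 3=d_G(u_1,w)+1$, and excising $v_i^-$ or $v_i$ only yields some chord from $v_i$ toward $w$ whose use you do not specify. As a minor remark, your closing step is fine and slightly different from the paper's: once Claims 5 and 6 are both available, you build an explicit Ore-cycle, whereas the paper invokes Lemma \ref{le3}$(f)$ (which encapsulates the same computation); but that part was not in question.
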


\begin{proof}
First we will show that $w^-v^-_i\notin \widetilde{E}(G)$. Since
$v_j^-v_j^+\in \widetilde{E}(G)$ and $v_jw\in E(G)$, we have
$w^-v^-_i\notin \widetilde{E}(G)$ by Lemma \ref{le3} $(e)$ and
symmetry.

Next we will show that $w^-v_j\in \widetilde{E}(G)$. Suppose not.
Consider the subgraph induced by $\{w,w^-,v_j,v_i^-\}$. Note that
$v_jv_i^-\notin \widetilde{E}(G)$ by Lemma \ref{le3} ($c$) and
$w^-v^-_i\notin \widetilde{E}(G)$. Then $\{w,w^-,v_j,v_i^-\}$
induces an $o$-light claw, a contradiction.

Next we will show that $u_1w\notin \widetilde{E}(G)$. Otherwise, $C'=u_1wC[w,v_j^-]v_j^-v_j^+C[v_j^+,w^-]w^-$ $v_ju_1$ (note that $w^-v_j\in \widetilde{E}(G)$) is an \emph{o}-cycle such that $V(C)\subset V(C')$, contradicting Claim \ref{c2}.

Now we will show that $wv_j^+\in \widetilde{E}(G)$.
Consider the subgraph induced by $\{v_j,u_1,w,v_j^+\}$. Note that $u_1w\notin \widetilde{E}(G)$ by the analysis above and $u_1v_j^+\notin \widetilde{E}(G)$ by Lemma \ref{le3} ($a$). Since $G$ is claw-heavy, we have $wv_j^+\in \widetilde{E}(G)$.
\end{proof}

By Claim \ref{c5}, we have $v_i^-w^+\in \widetilde{E}(G)$.
By Lemma \ref{le3} $(f)$, we have
$wv_j^+\notin \widetilde{E}(G)$, contradicting
Claim \ref{c6}. The proof of Theorem \ref{th3} is complete. {\hfill$\Box$}

\noindent{}
{\bf Proof of Theorem \ref{th4}}

Let $G$ be a graph satisfying the condition of Theorem
\ref{th4}. By Lemma \ref{le1}, there exists a heavy cycle
in $G$. Now choose a longest heavy cycle $C$ of $G$
and assign an orientation to it. Suppose $G$ is not
Hamiltonian. Then $V(G)\backslash V(C)\ne \emptyset$.
Let $R$ be a component of $G-C$ and $A=\{w_1,w_2,\ldots,w_k\}$ be the set of
neighbors of $R$ on $C$. Since $G$ is 3-connected, for any vertex $u$
of $R$, there exists a ($u$,$C$)-fan $F$ such that $F=(u;Q_1,Q_2,Q_3)$,
where $Q_1=uu_1\ldots u_{r_1}w_i$, $Q_2=us_1\ldots s_{r_2}w_j$
and $Q_3=uy_1\ldots y_{r_3}w_k$ are three internally disjoint
paths, $V(Q_1)\cap V(C)=w_i$, $V(Q_2)\cap V(C)=w_j$,
$V(Q_3)\cap V(C)=w_k$, and $w_i,w_j,w_k$ are in the order
of the orientation of $C$.

By the choice of $C$, all internal vertices on
$F$ are not heavy. By Lemma \ref{le3} $(b)$, there is at most
one heavy vertex in $N_{C}^+(R)$ and at most one heavy
vertex in $N_{C}^-(R)$. Now assume that
$w_i^{-},w_i^{+}$ are light. Hence $w_i^{-}w_i^{+}\in E(G)$, otherwise
$\{w_i,w_i^-,w_i^+,u_{r_1}\}$ induces a
light claw, contradicting $G$ is 1-heavy.

\setcounter{claim}{0}
\begin{claim}\label{c1}
There exists a ($u$,$C$)-fan $F$ such that $V(F)=\{u,w_i,w_j,w_k\}$.
\end{claim}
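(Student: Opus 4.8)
The plan is to take the fan as small as possible and then show that an internal vertex on any branch is impossible, treating a \emph{long} branch with the almost distance-hereditary hypothesis and a \emph{short} branch with the maximality of the heavy cycle $C$. Fix $u\in V(R)$ and, among all $(u,C)$-fans $F=(u;Q_1,Q_2,Q_3)$, choose one with $|V(F)|$ minimum, refined by minimality of the arcs $C(w_i,w_j),C(w_j,w_k),C(w_k,w_i)$ in the spirit of conditions $(1)$--$(2)$ in the proof of Theorem \ref{th3}; in particular this forces no vertex of $A$ to lie strictly inside these arcs. If $V(F)=\{u,w_i,w_j,w_k\}$ we are done, so suppose some branch, say $Q_1=uu_1\cdots u_{r_1}w_i$, has $r_1\geq 1$, and put $a=u_{r_1}$. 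Every internal vertex of $F$ lies in $R$, so $a\in V(R)$ and $aw_i\in E(G)$; by Lemma \ref{le3}$(a)$ neither $aw_i^{+}$ nor $aw_i^{-}$ lies in $\widetilde{E}(G)$.

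Suppose first $r_1\geq 2$. Since $aw_i^{+}\notin E(G)$ while $aw_i,w_iw_i^{+}\in E(G)$, we have $d_G(a,w_i^{+})=2$, so $d_H(a,w_i^{+})\leq 3$ for every connected induced subgraph $H$ containing $a$ and $w_i^{+}$. I would let $H$ be the subgraph induced by the vertices of $Q_1$, one further branch, and the arc $C[w_i^{+},w_j]$, with $w_i$ deleted; then the short route $aw_iw_i^{+}$ is gone and $u$ is a cut vertex of $H$ that separates the interior of $Q_1$ from $w_i^{+}$. Ruling out chords by Lemma \ref{le3}$(a)$--$(f)$ and the arc condition, distances add along $Q_1$, giving $d_H(a,w_i^{+})=d_H(a,u)+d_H(u,w_i^{+})\geq r_1+2\geq 4$, a contradiction. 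Hence $r_1\leq 1$.

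It remains to exclude $r_1=1$, where the distance bound is too weak and I would instead contradict the maximality of $C$. Now $u,u_1,w_i$ is a path attaching the light vertices $u,u_1$ to $w_i$, while $w_i^{-},w_i^{+}$ are light with $w_i^{-}w_i^{+}\in E(G)$. Splicing $u$ and $u_1$ into $C$ near $w_i$, using the chord $w_i^{-}w_i^{+}$ to bridge past the light vertex $w_i^{+}$, produces a cycle longer than $C$ that displaces only light vertices and therefore still contains every heavy vertex; this contradicts the choice of $C$ as a longest heavy cycle. The rerouting is of the same type carried out in the verifications of Lemma \ref{le3}$(d)$--$(f)$, now read as yielding a longer heavy cycle rather than an \emph{o}-cycle. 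Thus $r_1=0$, and the same argument applies to $Q_2$ and $Q_3$, whence $V(F)=\{u,w_i,w_j,w_k\}$.

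The step I expect to be hardest is the case $r_1=1$: it is not settled by any single distance estimate, and one must produce the correct rerouting and verify, via Lemma \ref{le3}, both that the required edges or $\widetilde{E}(G)$-pairs are present and that every displaced vertex is light (so heaviness of the cycle is preserved). A related subtlety, needed already for the additivity in the case $r_1\geq 2$, is excluding chords between the branches; both rest on the chosen minimality of $F$. Finally, since $G$ is only $1$-heavy, parts $(g)$--$(h)$ of Lemma \ref{le3} are unavailable, so everything must be driven by parts $(a)$--$(f)$ together with the edge $w_i^{-}w_i^{+}\in E(G)$ furnished by the $1$-heavy hypothesis.
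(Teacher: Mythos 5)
There is a genuine gap, and it sits exactly where you predicted: the case of a branch with one internal vertex. Your proposed resolution --- ``splicing $u$ and $u_1$ into $C$ near $w_i$, using the chord $w_i^-w_i^+$'' to get a longer heavy cycle --- does not produce any cycle at all. The only edges available at $u_1$ are $u_1u$ and $u_1w_i$, so any cycle through $V(C)\cup\{u,u_1\}$ must contain the path $w_i\,u_1\,u$, and $u$ must then return to $C$ through another branch, say at $w_j$. Deleting $w_i$ and $w_j$ from $C$ leaves the two arcs $C(w_i,w_j)$ and $C(w_j,w_i)$, and to traverse both and close up at $w_i$ you need (even after inserting the detour $w_i^-w_i^+$) one of $w_iw_j^-$ or $w_iw_j^+$ in $\widetilde{E}(G)$ --- precisely what Lemma \ref{le3}$(c)$ forbids once $w_i^-w_i^+\in\widetilde{E}(G)$. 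So no rerouting of the type used in Lemma \ref{le3}$(d)$--$(f)$ is available here, and the hardest case of the claim is left unproved. The paper's actual treatment of this situation is entirely different: it first pins down a fan with $Q_1=uw_i$ and shows by a distance computation that the last internal vertex $x$ of an offending branch must satisfy $xw_i\in E(G)$; for $Q_2$ it then re-roots the fan at $x$ to violate the minimality conditions, and for $Q_3$ it runs two further distance computations (in induced subgraphs omitting $w_i$) to force $w_i^+w_j,\,w_i^-w_j\in E(G)$, whence two claws centered at $w_j$ make both $w_j^-$ and $w_j^+$ heavy, contradicting $w_j^-w_j^+\notin\widetilde{E}(G)$. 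None of these ideas (re-rooting the fan, or the heavy-pair contradiction at $w_j$) appears in your proposal.

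A secondary weakness: in your $r_1\geq 2$ case the additivity $d_H(a,w_i^+)=d_H(a,u)+d_H(u,w_i^+)$ needs $u$ to separate $a$ from $w_i^+$ in $H$, i.e.\ no edges from internal vertices of $Q_1$ to $C(w_i,w_j]$ or to the other branch. Minimality of $|V(F)|$ alone does not obviously exclude, say, $u_sw_j\in E(G)$ or adjacencies between internal vertices of different branches; the paper avoids this by measuring the distance from $w_j^-$ to the internal vertex adjacent to $w_j$ and using the arc-minimality condition (2), which only requires that internal vertices of $Q_2$ have no neighbours strictly inside $C(w_i,w_j)$ --- a statement that genuinely follows from the chosen ordering of the minimality conditions. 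But even granting that case, the $r_1=1$ gap is fatal to the argument as written.
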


\begin{proof}
Now we choose the fan $F$ in such a way that:\\
(1) $|V(Q_1)|=2$ and $w_i\in V(Q_1)$;\\
(2) $|V(C[w_i,w_j])|$ is as small as possible subject to (1);\\
(3) $|V(Q_2)|$ is as small as possible subject to (1), (2);\\
(4) $|V(C[w_k,w_i])|$ is as small as possible subject to (1), (2) and (3);\\
(5) $|V(Q_3)|$ is as small as possible subject to (1), (2), (3) and (4)

Since $G$ is 3-connected, this choice condition is well-defined. Without loss of generality, assume
$Q_1=uw_i$. Note that $w_i^-w_i^+\in E(G)$.

First we show that $V(Q_2)= \{u,w_j\}$. Suppose there exists a vertex of $V(Q_2)\backslash \{u,w_j\}$.
Without loss of generality, set $x=s_{r_2}$. Let $H=G[V(Q_1)\cup V(Q_2)\cup V(C[w_i,w_j])]-w_j$.
By Lemma \ref{le3} $(c)$, it is easy to see that $w_iw_j^-\notin E(G)$, so $d_H(w_j^-,w_i)\geq 2$,
and the choice condition (2) implies that $N_{C(w_i,w_j)}(Q_2\backslash \{u,w_j\})=\emptyset$.
This means that $d_{H}(w_i,x)=d_{F}(w_i,x)$. If $xw_i\notin E(G)$, then $d_{F}(w_i,x)\geq 2$.
Since $|V(Q_2)|>2$, we have $d_H(w_j^-,x)=d_H(w_j^-,w_i)+d_F(w_i,x)\geq 4$.
It yields a contradiction to the fact $G$ is almost distance-hereditary
and $d_G(w_j^-,x)=2$. Thus, $xw_i\in E(G)$. Let $F=(x,xw_i,xw_j,Q_2[x,u]Q_3[u,w_j])$.
Then $F$ is a $(x,C)$-fan satisfying (1), (2) and $|\{x,w_j\}|=2$, contradicting the choice
condition $(3)$, a contradiction. Hence $V(Q_2)= \{u,w_j\}$.

Next we show that $V(Q_3)=\{u,w_k\}$. Suppose there exists a vertex of $V(Q_3)\backslash
\{u,w_k\}$. Without loss of generality, set $x=y_{r_3}$. If $xw_i\notin E(G)$,
let $H=G[V(Q_1)\cup V(Q_3)\cup V(C[w_k,w_i])]-w_k$. By Lemma \ref{le3} $(c)$,
we have $w_k^+w_i\notin E(G)$. This means $d_H(w_k^+,w_i)\geq 2$.
By the choice condition (4) (5), we have $N(Q_3\backslash \{w_k\})\cap V(C(w_k,w_i))=\emptyset$.
Since $|V(Q_3)|\geq 3$, $d_H(w_i,x)\geq 2$ and we have $d_H(w_k^+,x)=d_H(w_k^+,w_i)+d_F(w_i,x)\geq 4$.
It yields a contradiction to the fact $G$ is almost distance-hereditary
and $d_G(w_k^+,x)=2$. Thus, $xw_i\in E(G)$.
Since $d_{G}(x,w_i^+)=2$ and $G$ is almost distance-hereditary, let
$H=G[V(C[w_i,w_j])\cup V(Q_3[u,x])]-\{w_i\}$, we have $d_{H}(w_i^+,x)=3$. Note that
$N_C(x)\cap C(w_i,w_j]=\emptyset$ and $N_C(u)\cap C(w_i,w_j)=\emptyset$.
It follows that $w_i^+w_j\in E(G)$, and $w_j^-w_j^+\notin \widetilde{E}(G)$ by Lemma \ref{le3} $(c)$.
Consider the subgraph induced by $\{w_j,w_i^+,w_j^+,u\}$. We know that $w_j^+$ is heavy.
Now let $H=G[\{w_i^-\}\cup V(C[w_i,w_j])\cup V(Q_3[u,x])]-\{w_i\}$.
Since $d_{G}(w_i^-,x)=2$, we have $d_{H}(w_i^-,x)=3$, and $w_i^-w_j\in E(G)$.
Consider the subgraph induced by $\{w_j,w_i^-,w_j^-,u\}$. We can see that
$w_j^-$ is heavy. Now $w_j^-w_j^+\in \widetilde{E}(G)$, a contradiction. The proof is complete.
\end{proof}
By Claim \ref{c1}, there exists a ($u,C$)-fan $F$ such that $V(F)\backslash V(C)=\{u\}$.
Suppose that $N_C(u)=\{v_1,v_2,\ldots,v_r\}$ ($r\geq 3$) and $v_1,v_2,\ldots,v_r$ are in the order
of the orientation of $C$. In the following, all subscripts of $v$ are taken modulo $r$, and
$v_0=v_r$.

\begin{claim}\label{c2}
For any vertex $v_i\in N_C(u)$ such that $v_i^-v_i^+\in \widetilde{E}(G)$, there exists
a vertex $l_i\in C[v_i^{+},v_{i+1}^-)$ such that $v_{i+1}^-l\in E(G)$
and $lv_i\in E(G)$; there exists a vertex ${l_i}'\in C(v_{i-1}^{+},v_i^-]$
such that $v_{i-1}^+s\in E(G)$ and $sv_i\in E(G)$.
\end{claim}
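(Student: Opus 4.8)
The plan is to prove the first assertion (the existence of the pivot $l_i$ in the segment $C[v_i^+,v_{i+1}^-)$) directly, and then obtain the second assertion about $l_i'$ by applying the very same argument to the reverse orientation $\overleftarrow{C}$, under which successor and predecessor are interchanged while the hypothesis $v_i^-v_i^+\in\widetilde{E}(G)$ is an unordered condition and hence unaffected. So it suffices to produce $l_i$.

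The key is to feed the almost distance-hereditary hypothesis a carefully chosen connected induced subgraph. Since $u\in V(R)$ is adjacent to both $v_i$ and $v_{i+1}$, and since Lemma \ref{le3} $(a)$ (applied with $v_{i+1}$ in the role of $v_i$, along the path $v_iuv_{i+1}$) gives $uv_{i+1}^-\notin\widetilde{E}(G)$, we have $d_G(u,v_{i+1}^-)=2$. I would then set $H=G[\{u\}\cup V(C[v_i,v_{i+1}^-])]$, which is a connected induced subgraph, so that the almost distance-hereditary property yields $d_H(u,v_{i+1}^-)\leq d_G(u,v_{i+1}^-)+1=3$.

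The decisive step is to read off the structure of a shortest $(u,v_{i+1}^-)$-path in $H$. Because $v_i$ and $v_{i+1}$ are consecutive neighbours of $u$ on $C$, and $v_{i+1}$ has been deleted from $H$, the only neighbour of $u$ in $H$ is $v_i$; hence every $(u,v_{i+1}^-)$-path begins with the edge $uv_i$ and thereafter stays inside the segment, so $d_{H-u}(v_i,v_{i+1}^-)\leq 2$. On the other hand, the hypothesis $v_i^-v_i^+\in\widetilde{E}(G)$ together with Lemma \ref{le3} $(c)$ (taking $v_{i+1}$ as $v_j$) gives $v_iv_{i+1}^-\notin\widetilde{E}(G)$, hence $v_iv_{i+1}^-\notin E(G)$; in particular $v_{i+1}^-\neq v_i^+$ and the two vertices are non-adjacent, forcing $d_{H-u}(v_i,v_{i+1}^-)=2$. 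The internal vertex $l_i$ of a corresponding length-two path then lies in $C(v_i,v_{i+1}^-)=C[v_i^+,v_{i+1}^-)$ and satisfies $v_il_i\in E(G)$ and $l_iv_{i+1}^-\in E(G)$, exactly as claimed.

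I expect the main obstacle to be purely structural bookkeeping: first, choosing $H$ so that $u$ has $v_i$ as its \emph{unique} neighbour, which is precisely what converts the bound $d_H(u,v_{i+1}^-)\leq 3$ into the usable bound $d_{H-u}(v_i,v_{i+1}^-)\leq 2$; and second, confirming through Lemma \ref{le3} $(c)$ that $v_i$ and $v_{i+1}^-$ are genuinely non-adjacent, so the distance is pinned down to be exactly $2$ rather than $1$. Once these two points are secured, the pivot $l_i$ drops out, and $l_i'$ follows by the symmetric argument on $\overleftarrow{C}$.
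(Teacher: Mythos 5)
Your proposal is correct and follows essentially the same route as the paper: both take the connected induced subgraph $H=G[\{u\}\cup V(C[v_i,v_{i+1}])]-v_{i+1}$, use $d_G(u,v_{i+1}^-)=2$ together with the almost distance-hereditary property to get $d_H(u,v_{i+1}^-)\leq 3$, invoke Lemma \ref{le3}~$(c)$ to rule out $v_iv_{i+1}^-\in E(G)$, and extract the intermediate vertex $l_i$ from a length-two $(v_i,v_{i+1}^-)$-path, with $l_i'$ obtained by symmetry. Your explicit observation that $v_i$ is the unique neighbour of $u$ in $H$ is a point the paper leaves implicit, but the argument is the same.
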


\begin{proof}
Let $H=G[\{u\}\cup V(C[v_i,v_{i+1}])]-v_{i+1}$. Since $d_G(v_{i+1}^-,u)=2$ and
$G$ is almost distance-hereditary, we have $d_H(v_{i+1}^-,u)\leq 3$.
By Lemma \ref{le3} $(c)$, we have $v_iv_{i+1}^-\notin E(G)$ and $d_H(v_{i+1}^-,u)= 3$.
It follows that $d_H(v_{i+1}^-,v_i)=2$. So there exists a vertex
$l_i\in C[v_i^{+},v_{i+1}^-)$ such that $v_{i+1}^-l_i\in E(G)$ and $l_iv_i\in E(G)$.
The other assertion can be proved similarly.
\end{proof}

By Lemma \ref{le3} $(b)$, there is at most
one heavy vertex in $N_{C}^+(u)$ and at most one heavy
vertex in $N_{C}^-(u)$. Since $r\geq 3$, we know that there exits
$v_j\in N_{C}(u)$, such that $v_j^-v_j^+\in E(G)$. Without loss of generality, assume that $v_1^-v_1^+\in E(G)$.
We divide the proof into two cases.

\setcounter{case}{0}

\begin{case}
$v_2^-v_2^+\notin E(G)$ and $v_r^-v_r^+\notin E(G)$.
\end{case}
Both $\{v_2,v_2^-,v_2^+,u\}$ and $\{v_r,v_r^-,v_r^+,u\}$ induce claws.
By Lemma \ref{le3} $(b)$ and the fact $G$ is 1-heavy, $v_2^-$, $v_r^+$ are heavy
or $v_2^+$ and $v_r^-$ are heavy vertices.

By Claim \ref{c2}, there exists a vertex $l_1\in C[v_1^{+},v_2^-)$ such that $v_2^-l_1\in E(G)$
and $l_1v_1\in E(G)$, and there exists a vertex ${l_1}'\in C(v_r^{+},v_1^-]$
such that $v_r^+{l_1}'\in E(G)$ and ${l_1}'v_1\in E(G)$.
\begin{claim}\label{c3}
$(1)$ $v_1,v_1^-,v_1^+,l_1^+,l_1^-,{l_1}'^-,{l_1}'^+$ are light vertices;\\
$(2)$ $v_1l_1^-\in E(G)$;\\
$(3)$ $l_1,{l_1}'$ are light vertices;\\
$(4)$ $v_1^-v_2\notin E(G)$;\\
$(5)$ $v_1^-l_1\in E(G)$ and $l_1v_2\in E(G)$.
\end{claim}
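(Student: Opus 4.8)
The plan is to prove the five assertions using three recurring devices and, broadly, in the dependency order ``lightness first, then edges''. The devices are: (i) Lemma \ref{le3}$(b)$, which forbids two ``parallel'' shifted neighbours from both being heavy, for the lightness claims; (ii) the almost-distance-hereditary inequality $d_H(x,y)\le d_G(x,y)+1$ applied to carefully chosen connected induced subgraphs $H$, to force the asserted edges; and (iii) the maximality of the longest heavy cycle $C$ together with Lemma \ref{le2}, to rule out the asserted non-edge. Before starting I would record the non-edges that Lemma \ref{le3} already supplies from $v_1^-v_1^+\in E(G)$: part $(c)$ gives $v_1v_j^-\notin\widetilde E(G)$ and $v_1v_j^+\notin\widetilde E(G)$ for every $v_j\in N_C(u)$; and applying parts $(d)$ and $(e)$ with $(v_i,v_j,l)=(v_1,v_2,l_1)$ --- legitimate since $l_1\in N(v_1)\cap N(v_2^-)$ and $l_1\in V(C[v_1,v_2^-])$ --- yields $v_1l_1^+\notin\widetilde E(G)$, $l_1^-l_1^+\notin\widetilde E(G)$, $l_1^-v_2^-\notin\widetilde E(G)$ and $v_2^+l_1^+\notin\widetilde E(G)$. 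These feed every later step.

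For the lightness statements $(1)$ and $(3)$ I would first pin down the heavy vertices. In Case~1 both $\{v_2,v_2^-,v_2^+,u\}$ and $\{v_r,v_r^-,v_r^+,u\}$ are claws, $u$ is light (it lies off the heavy cycle $C$), and Lemma \ref{le3}$(b)$ forbids $v_2^+v_r^+\in\widetilde E(G)$ and $v_2^-v_r^-\in\widetilde E(G)$; hence, as already noted in the text, the heavy pair is $\{v_2^-,v_r^+\}$ or $\{v_2^+,v_r^-\}$. Now $v_1^-$ heavy would, via Lemma \ref{le3}$(b)$, put $v_1^-v_2^-$ or $v_1^-v_r^-$ into $\widetilde E(G)$, a contradiction; so $v_1^-$ is light, and symmetrically $v_1^+$ is light. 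Likewise $v_1$ heavy would, together with the heavy member of $\{v_2^\pm\}$ and the non-edges $v_1v_2^\pm\notin\widetilde E(G)$ above, give a contradiction, so $v_1$ is light. The remaining vertices $l_1^\pm,l_1'^\pm$ are not neighbours of $R$, so they need the recorded non-edges (e.g. $v_2^+l_1^+\notin\widetilde E(G)$, $l_1^-l_1^+\notin\widetilde E(G)$ and their mirror images for $l_1'$) together with the heavy pair to exclude heaviness; where these do not suffice I would fall back on device (iii), showing that a heavy such vertex supplies a chord in $\widetilde E(G)$ completing a cycle longer than $C$.

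The adjacency $(2)$ $v_1l_1^-\in E(G)$ and the two adjacencies in $(5)$ $v_1^-l_1\in E(G)$, $l_1v_2\in E(G)$ I would obtain from device (ii). In each case the relevant pair is at distance $2$ in $G$ (through $l_1$, $v_1$ and $v_2^-$ respectively), so $d_H\le 3$ in a suitable connected induced subgraph $H$ chosen to omit the obvious length-two connector; I would then eliminate every alternative path of length at most $3$ in $H$ using the non-edges from Lemma \ref{le3}$(a)$--$(f)$ recorded above, leaving the asserted edge as the only possibility. For the non-edge $(4)$ $v_1^-v_2\notin E(G)$ I would argue by contradiction: assuming $v_1^-v_2\in E(G)$ and using the chords $v_1^-v_1^+$, $v_1l_1$, $l_1v_2^-$ on one side, the symmetric chords through $l_1'$, $v_r^+$ on the other, and the degree-sum chord between the two heavy vertices (one of $v_2^-v_r^+$, $v_2^+v_r^-$) as a bridge between the two arcs of $C$, I would splice the fan at $u$ into $C$ to build an o-cycle $C'$ with $V(C)\subset V(C')$, contradicting the maximality of $C$ by Lemma \ref{le2}.

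The hard part will be the edge-forcing arguments $(2)$ and $(5)$ and, above all, the non-edge $(4)$. For $(2)$ and $(5)$ the difficulty is not the inequality $d_H\le 3$ itself but verifying that the chosen $H$ really realises the claimed distance, i.e. that \emph{all} competing short paths are blocked by the recorded non-edges; a single overlooked chord would break the argument. For $(4)$ the obstacle is that the almost-distance-hereditary inequality does not by itself forbid $v_1^-v_2$ (adding this edge keeps all distances small), so the contradiction must come from an explicit spanning o-cycle, and this appears to require the extra $\widetilde E(G)$-chord furnished by the heavy pair to bridge the two arcs of $C$; assembling that o-cycle so that it covers \emph{every} vertex of $C$ is the most delicate point of the whole claim.
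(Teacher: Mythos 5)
Your outline captures part of the paper's argument --- the lightness claims in $(1)$ and $(3)$ are indeed obtained from Lemma~\ref{le3}$(b)$, $(c)$, $(e)$ combined with the heavy pair $\{v_2^-,v_r^+\}$ or $\{v_2^+,v_r^-\}$, the edge $l_1v_2\in E(G)$ in $(5)$ is obtained exactly as you propose (the five-vertex set $\{v_1^-,l_1,v_2^-,v_2,u\}$ would otherwise induce a path of length $4$ between vertices at distance $2$), and the first subcase of $(4)$ is settled by the o-cycle spliced through the chord $v_2^-v_r^+\in\widetilde{E}(G)$. But there is a genuine gap: your three devices omit the tool the paper actually leans on for the remaining steps, namely the $1$-heavy hypothesis applied to a \emph{light claw} (an induced claw all of whose end vertices are already known to be light cannot exist). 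For $(2)$ the paper notes $v_1l_1^+\notin\widetilde{E}(G)$ and $l_1^-l_1^+\notin\widetilde{E}(G)$ by Lemma~\ref{le3}$(d)$, so if $v_1l_1^-\notin E(G)$ then $\{l_1,l_1^+,v_1,l_1^-\}$ is a light claw; the same device forces $v_1^-l_1\in E(G)$ in $(5)$ via the claw $\{v_1,l_1,u,v_1^-\}$, and disposes of the second subcase of $(4)$ via the claw $\{v_2,v_2^-,u,v_1^-\}$.

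Your proposed substitutes do not work in these places. The almost-distance-hereditary inequality can force a path of length exactly $3$ in a suitable $H$ (and hence identify an intermediate edge, which is how $w$ and $l_i$ are produced elsewhere in the paper), but it cannot force a \emph{direct} edge between two vertices at distance $2$: to conclude $v_1l_1^-\in E(G)$ you would need some connected induced $H$ in which $d_H(v_1,l_1^-)\ge 4$ follows from $v_1l_1^-\notin E(G)$, and no such $H$ is available (any $H$ containing $l_1$ already gives distance $2$). Likewise, in the subcase of $(4)$ where $v_2^+$ and $v_r^-$ are heavy, the bridging chord $v_2^+v_r^-$ joins two vertices lying on the \emph{same} arc of $C$ between consecutive attachment points, so it does not splice with $v_1^-v_2$ and the fan at $u$ into an o-cycle covering all of $V(C)$; the light-claw argument is what rescues that case. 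Also note a dependency issue with your ``lightness first, then edges'' ordering: the proof that $l_1$ is light in $(3)$ requires $v_1l_1^-\in E(G)$ from $(2)$ (so that Lemma~\ref{le3}$(e)$ applies with $l=l_1^-$ to give $v_2^+l_1\notin\widetilde{E}(G)$), so $(2)$ must precede $(3)$.
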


\begin{proof}
$(1)$ By Lemma \ref{le3} $(b)$, $(c)$ and $(e)$, it is obvious that 
$v_1,v_1^-,v_1^+,l_1^+,l_1^-,{l_1}'^-,{l_1}'^+$ are light vertices.

$(2)$ Suppose that $v_1l_1^-\notin E(G)$. Note that $v_1l_1^+\notin \widetilde{E}(G)$ 
and $l_1^-l_1^+\notin \widetilde{E}(G)$ by Lemma \ref{le3} ($d$). 
Since $l_1^+,v_1,l_1^-$ are light, $\{l_1,l_1^+,v_1,l_1^-\}$ induces a 
light claw, a contradiction.

$(3)$ Since $v_1l_1^-\in E(G)$, we have $v_2^+l_1\notin \widetilde{E}(G)$ and 
$v_r^+l_1\notin \widetilde{E}(G)$ by Lemma \ref{le3} $(e)$. Note that 
either $v_r^+$ or $v_2^+$ is a heavy vertex. This implies $l_1$ is 
a light vertex. The other assertion can be proved similarly.

$(4)$ Suppose that $v_1^-v_2\in E(G)$ and $v_2^-$, $v_3^+$ are heavy. 
Let $C'=v_1^-v_2C[v_2,v_r]v_ruv_1$ $C[v_1,v_2^-]v_2^-v_r^+C[v_r^+,v_1^-]$. 
Then $C'$ is an $o$-cycle such that $V(C)\subset V(C')$, a contradiction.

Suppose that $v_1^-v_2\in E(G)$ and $v_2^+$, $v_r^-$ are heavy. Now 
$\{v_2,v_2^-,u,v_1^-\}$ induces a light claw, a contradiction.

$(5)$ Suppose that $v_1^-l_1\notin E(G)$. Note that $uv_1^-\notin \widetilde{E}(G)$ 
by Lemma \ref{le3} ($a$) and $ul_1\notin \widetilde{E}(G)$ by Lemma \ref{le3} 
$(c)$ and $v_1l_1^-\in E(G)$. Now $\{v_1,l_1,u,v_1^-\}$ induces a light claw, 
a contradiction.

Suppose that $l_1v_2\notin E(G)$. Let $H=G[\{v_1^-,l_1,v_2^-,v_2,u\}]$. 
By Lemma \ref{le3}, we have $uv_2^-\notin E(G)$, $ul_1\notin E(G)$, 
$uv_1^-\notin E(G)$, $v_2v_1^-\notin E(G)$ and $v_1^-v_2^-\notin E(G)$. 
Now $G[\{v_1^-,l_1,v_2^-,v_2,u\}]$ is an induced path of length 4 in $G$. 
It follows that $d_H(v_1^-,u)=4$, contradicting the fact that $d_G(v_1^-,u)=2$ 
and $G$ is almost distance-hereditary. Hence we have $l_1v_2\in E(G)$.
\end{proof}

Now we consider the following two subcases.

\begin{subcase}
$v_2^-$, $v_r^+$ are heavy vertices.
\end{subcase}

By Lemma \ref{le3} $(c)$ and $v_1l_1^-\in E(G)$, $ul_1\notin \widetilde{E}(G)$. 
By Lemma \ref{le3} $(a)$ and $(e)$, we have $uv_2^+\notin \widetilde{E}(G)$ 
and $l_1v_2^+\notin \widetilde{E}(G)$. By Claim \ref{c3} $(1)$ and $(3)$, 
$\{v_2,l_1,u,v_2^+\}$ induces a light claw, a contradiction.

\begin{subcase}
$v_2^+$ and $v_r^-$ are heavy.
\end{subcase}

\begin{claim}\label{c4}
$G[\{v_r^+,{l_1}',l_1,v_2,u\}]$ is an induced path in $G$.
\end{claim}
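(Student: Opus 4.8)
The plan is to show that the five vertices $v_r^+,{l_1}',l_1,v_2,u$ form, in this listing, the path $v_r^+\,{l_1}'\,l_1\,v_2\,u$: I will produce its four edges and then rule out all six possible chords. Three of the path-edges are already in hand: $l_1v_2\in E(G)$ by Claim~\ref{c3}(5), $v_2u\in E(G)$ since $v_2\in N_C(u)$, and $v_r^+{l_1}'\in E(G)$ by Claim~\ref{c2} (as restated in the Case~1 setup). So the only path-edge needing work is ${l_1}'l_1$.

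The crux is ${l_1}'l_1\in E(G)$, which I will get from a light-claw argument centred at $v_1$. Note first that $u$ is \emph{light}: since $C$ is a longest heavy cycle it contains every heavy vertex of $G$, and $u\notin V(C)$, so $u$ cannot be heavy. By Claim~\ref{c3}(3) both $l_1$ and ${l_1}'$ are light as well. Moreover $ul_1\notin E(G)$ and $u{l_1}'\notin E(G)$, because the only neighbours of $u$ on $C$ are $v_1,\dots,v_r$, whereas $l_1$ lies strictly inside the arc $C(v_1,v_2)$ and ${l_1}'$ strictly inside $C(v_r,v_1)$, arcs containing no $v_i$. Now $v_1$ is adjacent to each of ${l_1}',l_1,u$ (by Claim~\ref{c2} and the definition of $N_C(u)$). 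Hence if ${l_1}'l_1\notin E(G)$, then $\{v_1,{l_1}',l_1,u\}$ induces a claw all of whose end vertices are light, contradicting that $G$ is $1$-heavy. Therefore ${l_1}'l_1\in E(G)$, and $v_r^+{l_1}'l_1v_2u$ spans the subgraph by a path.

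It remains to kill the six chords. Three are immediate: $v_r^+u\notin E(G)$ by Lemma~\ref{le3}(a), while $u{l_1}'\notin E(G)$ and $ul_1\notin E(G)$ were just observed. The remaining three are the ``cross-arc'' non-adjacencies $v_r^+l_1$, $v_r^+v_2$ and ${l_1}'v_2$, joining the $C(v_r,v_1)$-side to the $C(v_1,v_2)$/middle side. For these I would exploit the standing data $v_1^-v_1^+\in E(G)$, $v_1^-l_1\in E(G)$ (Claim~\ref{c3}) and $v_r^+{l_1}'\in E(G)$, feeding them into Lemma~\ref{le3}(e),(f) and their reverse-orientation versions (running $C$ backwards, so the pair $(v_1,v_r)$ replaces $(v_1,v_2)$), together with the heaviness of $v_2^+$ and $v_r^-$ defining this subcase; where Lemma~\ref{le3} does not apply directly, I would assume the chord present and build an o-cycle through $V(C)\cup\{u\}$, contradicting the maximality of $C$ via Lemma~\ref{le2}.

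I expect this last step to be the main obstacle. The clean part is ${l_1}'l_1\in E(G)$: once ``$u$ is light'' and ``$u$ misses the interior arc vertices'' are in place, the light claw at $v_1$ settles it in one line. The delicate part is the three cross-arc non-adjacencies, since there the candidate chord sits between two long arcs and the correct o-cycle (or distance) argument must be chosen so that every vertex of $C$ is reinserted; this bookkeeping, rather than any single idea, is where the argument can break. Once the subgraph is confirmed to be the induced path $v_r^+{l_1}'l_1v_2u$, the subcase closes quickly, since $d_G(v_r^+,u)=2$ (through $v_r$) while the induced path forces $d_H(v_r^+,u)=4$, violating the almost distance-hereditary property.
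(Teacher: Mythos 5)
Your identification of the four path-edges and the key step ${l_1}'l_1\in E(G)$ matches the paper exactly: the paper also derives ${l_1}'l_1\in E(G)$ from the light claw $\{v_1,l_1,{l_1}',u\}$ (with $u$, $l_1$, ${l_1}'$ light and $u$ non-adjacent to $l_1,{l_1}'$), and it also disposes of $v_r^+u$ via Lemma~\ref{le3}$(a)$ and of $v_r^+l_1$ via the Lemma~\ref{le3}$(e)$ computation already recorded in the proof of Claim~3$(3)$. Up to this point you are on the paper's track.

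The gap is in the two remaining chords $v_r^+v_2$ and ${l_1}'v_2$, which you leave as a plan rather than an argument, and the plan as stated would not go through. If you assume, say, $v_r^+v_2\in E(G)$ and try to ``build an o-cycle through $V(C)\cup\{u\}$'' directly, you cannot close such a cycle: rerouting through the chord $v_r^+v_2$ strands the vertex $v_2^-$ (and similarly ${l_1}'v_2$ strands $v_2^-$, while the second construction also needs to bridge past ${l_1}'$). The paper's proof supplies exactly the missing ingredient you do not mention: it first applies the $1$-heavy hypothesis to the claw $\{v_2;v_2^-,v_r^+,u\}$ (respectively $\{v_2;v_2^-,{l_1}',u\}$), using that $v_2^-$, $v_r^+$, ${l_1}'$ and $u$ are all light in this subcase, to force the auxiliary edge $v_r^+v_2^-\in E(G)$ (respectively ${l_1}'v_2^-\in E(G)$, plus a further light-claw step giving $v_1^+{l_1}'^+\in E(G)$); only with these extra edges can the o-cycle be assembled so that every vertex of $C$ is reinserted, after which Lemma~\ref{le2} yields the contradiction. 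Without that intermediate light-claw step the ``bookkeeping'' you flag is not merely delicate but impossible, so the proposal as written does not establish the claim.
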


\begin{proof}
It is easily to check that $v_r^+{l_1}'\in E(G)$, $l_1v_2\in E(G)$ (by Claim \ref{c3} (5))
and $v_2u\in E(G)$. By Lemma \ref{le3} $(a)$, $v_r^+u\notin E(G)$. From the
proof of Claim \ref{c3} $(3)$, we know that $v_r^+l_1\notin E(G)$.
Since $v_1l_1^-\in E(G)$, $ul_1\notin \widetilde{E}(G)$ by Lemma \ref{le3}
$(c)$. By symmetry, we have $u{l_1}'\notin \widetilde{E}(G)$ since $ul_1\notin \widetilde{E}(G)$.
Next we only need show that ${l_1}'l_1\in E(G)$, $v_r^+v_2\notin E(G)$ and ${l_1}'v_2\notin E(G)$.

\begin{subclaim}
${l_1}'l_1\in E(G)$.
\end{subclaim}

\begin{proof}
Otherwise, $\{v_1,l_1,{l_1}',u\}$ induces a light claw, contradicting $G$ is 1-heavy.
\end{proof}

\begin{subclaim}
$v_r^+v_2\notin E(G)$ and ${l_1}'v_2\notin E(G)$.
\end{subclaim}

\begin{proof}
Suppose that $v_r^+v_2\in E(G)$. Consider the subgraph induced by $\{v_2,v_2^-,v_r^+,u\}$. 
By Lemma \ref{le3} $(a)$, we have $uv_2^-\notin E(G)$ and $uv_r^+\notin E(G)$. Since 
$v_2^-,v_r^+,u$ are light and $G$ is 1-heavy, $v_r^+v_2^-\in E(G)$. Now 
$C'=v_1l_1^-\overleftarrow{C}[l_1^-,v_1^+]v_1^+v_1^-\overleftarrow{C}[v_1^-,v_r^+]
v_r^+v_2^-\overleftarrow{C}[v_2^-,l_1]l_1v_2C[v_2,v_r]\break v_ruv_1$ is an 
\emph{o}-cycle such that $V(C)\subset V(C')$, a contradiction.

Suppose that ${l_1}'v_2\in E(G)$. Consider the subgraph induced by 
$\{v_2,v_2^-,{l_1}',u\}$. Since $v_2^-,{l_1}',u$ are light and $G$ 
is 1-heavy, ${l_1}'v_2^-\in E(G)$. Now $C'=v_1uv_2C[v_2,{l_1}']{l_1}'v_2^-\break 
\overleftarrow{C}[v_2^-,v_1^+]v_1^+{l_1}'^+C[{l_1}'^+,v_1]$ is an 
\emph{o}-cycle such that $V(C)\subset V(C')$, a contradiction. (Note that
$v_1^+{l_1}'^+\in E(G)$ since otherwise $\{{l_1}',v_r^+,v_1^+,{l_1}'^+\}$
induces a light claw, a contradiction.)
\end{proof}
Now it is proved that $G[\{v_r^+,{l_1}',l_1,v_2,u\}]$ is an induced path in $G$.
\end{proof}

By Claim \ref{c4}, $G[\{v_r^+,{l_1}',l_1,v_2,u\}]$  is an induced path of length 4 in $G$. 
It follows that $d_H(v_r^+,u)=4$, contradicting the fact that $d_G(v_r^+,u)=2$ and 
$G$ is almost distance-hereditary.

\begin{case}
$v_2^-v_2\in E(G)$ or $v_r^-v_r^+\in E(G)$.
\end{case}
Without loss of generality, by symmetry, assume that $v_2^-v_2^+\in E(G)$.

By Claim \ref{c2}, for any vertex $v_i$ such that $v_i^-v_i^+\in E(G)$, 
there exists a vertex $l_i\in C[v_i^{+},v_{i+1}^-)$ such that $l_iv_i\in E(G)$
and $v_{i+1}^-l\in E(G)$.

\begin{claim}\label{c5}
$v_{i+1}^+l_i\in E(G)$.
\end{claim}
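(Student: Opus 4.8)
The plan is to argue by contradiction using the almost distance-hereditary hypothesis, by exhibiting an induced path of length $4$ between two vertices that lie at distance $2$ in $G$. The natural reference pair is $u$ and $v_{i+1}^+$. First I would record that $d_G(u,v_{i+1}^+)=2$: since $uv_{i+1}\in E(G)$ and $v_{i+1}v_{i+1}^+\in E(G)$ we have $d_G(u,v_{i+1}^+)\le 2$, while $uv_{i+1}^+\notin \widetilde{E}(G)$ by Lemma \ref{le3}$(a)$ forces $d_G(u,v_{i+1}^+)\ge 2$.

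Suppose, for contradiction, that $v_{i+1}^+l_i\notin E(G)$, and consider the induced subgraph $H=G[\{u,v_i,l_i,v_{i+1}^-,v_{i+1}^+\}]$. The four consecutive pairs $uv_i$, $v_il_i$, $l_iv_{i+1}^-$ and $v_{i+1}^-v_{i+1}^+$ should all be edges: the first holds because $v_i\in N_C(u)$, the second and third come from Claim \ref{c2}, and the last is the edge I expect to have available in this configuration (discussed below). I would then verify that the remaining pairs are non-edges, so that $H$ is exactly the induced path $uv_il_iv_{i+1}^-v_{i+1}^+$. Indeed $ul_i\notin E(G)$ because $v_i$ and $v_{i+1}$ are consecutive vertices of $N_C(u)$, whence $N_C(u)\cap C(v_i,v_{i+1})=\emptyset$; $uv_{i+1}^-\notin E(G)$ and $uv_{i+1}^+\notin E(G)$ by Lemma \ref{le3}$(a)$; $v_iv_{i+1}^-\notin E(G)$ and $v_iv_{i+1}^+\notin E(G)$ by Lemma \ref{le3}$(c)$ applied with $v_i^-v_i^+\in \widetilde{E}(G)$; and $l_iv_{i+1}^+\notin E(G)$ is the standing assumption. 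Distinctness of the five vertices is immediate, since $l_i\in C[v_i^+,v_{i+1}^-)$ guarantees $l_i\ne v_{i+1}^-$. Consequently $d_H(u,v_{i+1}^+)=4$, whereas the almost distance-hereditary property gives $d_H(u,v_{i+1}^+)\le d_G(u,v_{i+1}^+)+1=3$, a contradiction. Hence $v_{i+1}^+l_i\in E(G)$.

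The step I expect to be the main obstacle is securing the edge $v_{i+1}^-v_{i+1}^+\in E(G)$, that is, attaching $v_{i+1}^+$ to the rest of the path through its $C$-predecessor; this is precisely the edge that closes the induced $P_5$ at $v_{i+1}^+$, and no $\widetilde{E}$-substitute will do, since the path in $H$ must consist of genuine edges. If instead $v_{i+1}^-v_{i+1}^+\notin E(G)$, then $\{v_{i+1},v_{i+1}^-,v_{i+1}^+,u\}$ is a claw (its non-centre pairs are non-edges by Lemma \ref{le3}$(a)$), so by $1$-heaviness one of $v_{i+1}^-,v_{i+1}^+$ is heavy, and the clean argument above no longer applies verbatim. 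In the way the claim is actually used, two consecutive neighbours carry the bypass edge (for example $v_{i+1}=v_2$ with $v_2^-v_2^+\in E(G)$ in this case), so $v_{i+1}^-v_{i+1}^+\in E(G)$ is at hand; making this reduction explicit, and disposing of the heavy alternative separately, is where the genuine care is required.
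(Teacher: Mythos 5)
Your proposal is essentially identical to the paper's proof: both assume $v_{i+1}^+l_i\notin E(G)$, form the induced path $uv_il_iv_{i+1}^-v_{i+1}^+$ on $\{u,v_i,l_i,v_{i+1}^-,v_{i+1}^+\}$, and derive $d_H(v_{i+1}^+,u)=4>d_G(v_{i+1}^+,u)+1=3$, contradicting the almost distance-hereditary property. The only difference is that you explicitly flag the need for the edge $v_{i+1}^-v_{i+1}^+\in E(G)$, which the paper leaves implicit; your observation is correct, and indeed in every application of this claim (always within Case 2, with $i\in\{1,r\}$ or the symmetric variant) the relevant bypass edge is available by the standing hypotheses $v_1^-v_1^+\in E(G)$ and $v_2^-v_2^+\in E(G)$ (resp.\ $v_r^-v_r^+\in E(G)$ in Subcase 2.2).
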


\begin{proof}
Suppose that $v_{i+1}^+l_i\notin E(G)$. Let $H=G[\{v_{i+1}^+,v_{i+1}^-,l_i,v_i,u\}]$. 
By Lemma \ref{le3} $(c)$, we have $v_iv_{i+1}^-\notin E(G)$ and $v_iv_{i+1}^+\notin E(G)$. 
We can see that $H$ is an induced path of length 4 in $G$. Hence $d_H(v_{i+1}^+,u)=4$, 
contradicting the fact that $d_G(v_{i+1}^+,u)=2$ and $G$ is almost distance-hereditary.
\end{proof}

By Claim \ref{c5} and Lemma \ref{le3} $(b)$, we have $l_i\neq v_i^+$.

\begin{claim}\label{c6}
$\{l_i,l_{i}^-,v_i,v_{i+1}^-\}$ induces a claw.
\end{claim}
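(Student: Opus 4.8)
The plan is to show that $\{l_i,l_i^-,v_i,v_{i+1}^-\}$ is an induced $K_{1,3}$ centred at $l_i$; that is, that $l_i$ is joined to each of $l_i^-,v_i,v_{i+1}^-$ while these three are pairwise non-adjacent. The three edges at the centre are immediate: $l_il_i^-\in E(G)$ because they are consecutive on $C$, and $l_iv_i,\,l_iv_{i+1}^-\in E(G)$ by the choice of $l_i$ in Claim~\ref{c2}. Two of the three required non-adjacencies also fall straight out of Lemma~\ref{le3}, using the standing hypothesis $v_i^-v_i^+\in E(G)\subseteq\widetilde{E}(G)$: part~$(c)$ (with $v_j=v_{i+1}$) gives $v_iv_{i+1}^-\notin\widetilde{E}(G)$, and part~$(e)$ (with $l=l_i$, $v_j=v_{i+1}$, noting $l_i\in V(C[v_i,v_{i+1}^-])\cap N(v_i)$) gives $l_i^-v_{i+1}^-\notin\widetilde{E}(G)$; in particular neither pair is an edge of $G$.

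The main obstacle is the last non-adjacency, $l_i^-v_i\notin E(G)$, which does not come directly from Lemma~\ref{le3} and which simultaneously rules out the degenerate possibility $l_i^-=v_i^+$. I would handle it by contradiction: assuming $l_i^-v_i\in E(G)$, I build a cycle through $V(C)\cup\{u\}$ from the edges at hand. Concretely, set
$$C'=v_i\,u\,v_{i+1}\,\overleftarrow{C}[v_{i+1}^-,l_i]\,l_i\,v_{i+1}^+\,C[v_{i+1}^+,v_i^-]\,v_i^-\,v_i^+\,C[v_i^+,l_i^-]\,l_i^-\,v_i.$$
Reading this off, $C'$ goes $v_i\to u\to v_{i+1}$, runs backwards along $C$ to $l_i$, uses the edge $l_iv_{i+1}^+$ supplied by Claim~\ref{c5} to reach $v_{i+1}^+$, runs forward along $C$ to $v_i^-$, crosses the chord $v_i^-v_i^+$, runs forward along $C$ to $l_i^-$, and finally returns to $v_i$ along the assumed edge $l_i^-v_i$. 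Every edge used lies in $E(G)$ (the arcs of $C$, the edges $v_iu$ and $uv_{i+1}$, the chord $v_i^-v_i^+$, and the edges from Claim~\ref{c5} and from the assumption), and the three arcs $C[v_i^+,l_i^-]$, $C[l_i,v_{i+1}^-]$, $C[v_{i+1}^+,v_i^-]$ together with $\{v_i,v_{i+1}\}$ exhaust $V(C)$, so $C'$ is an \emph{o}-cycle with $V(C)\subset V(C')$. By Lemma~\ref{le2} this yields a cycle longer than $C$ which still contains every heavy vertex, contradicting the choice of $C$ as a longest heavy cycle. Hence $l_i^-v_i\notin E(G)$.

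Finally, $l_i^-v_i\notin E(G)$ forces $l_i^-\neq v_i^+$ (since $v_i^+v_i\in E(G)$) and $l_i^-\neq v_i$, so together with $l_i\neq v_i^+$ (established right after Claim~\ref{c5}) and $v_{i+1}^-\neq v_i$ (clear from Claim~\ref{c2}) the four vertices are distinct; combined with the three incidences at $l_i$ and the three non-adjacencies, $\{l_i,l_i^-,v_i,v_{i+1}^-\}$ induces a claw with centre $l_i$. The only points needing care when writing this out in full are verifying that the displayed arcs really partition $V(C)\setminus\{v_i,v_{i+1}\}$ and that each edge of $C'$ belongs to $E(G)$, both of which are routine given Claims~\ref{c2} and~\ref{c5}.
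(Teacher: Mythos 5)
Your proof is correct and follows essentially the same route as the paper: the centre edges come from Claim~\ref{c2}, and the non-adjacencies $v_iv_{i+1}^-\notin\widetilde{E}(G)$ and $l_i^-v_{i+1}^-\notin\widetilde{E}(G)$ are obtained from Lemma~\ref{le3}~$(c)$ and $(e)$ exactly as in the paper. The only difference is cosmetic: for $v_il_i^-\notin E(G)$ the paper simply invokes Claim~\ref{c5} together with Lemma~\ref{le3}~$(e)$ in contrapositive form (taking $l=l_i^-$, so that $v_{i+1}^+l_i\notin\widetilde{E}(G)$ would follow), whereas you re-derive that very instance by writing out the \emph{o}-cycle, which is precisely the cycle appearing in the lemma's own proof.
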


\begin{proof}
By Lemma \ref{le3} $(c)$ and $(e)$, we have $v_iv_{i+1}^-\notin \widetilde{E}(G)$ and $l_{i}^-v_{i+1}^-\notin \widetilde{E}(G)$.
By Claim \ref{c5} and Lemma \ref{le3} $(e)$, we have $v_il_i^-\notin E(G)$. So $\{l_i,l_{i}^-,v_i,v_{i+1}^-\}$ induces a claw.
\end{proof}

\begin{subcase}
$v_r^-v_r^+\notin E(G)$.
\end{subcase}

\begin{claim}\label{c7}
$\{l_1,l_1^+,v_1,v_2^+\}$ induces a light claw.
\end{claim}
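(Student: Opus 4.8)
The plan is to show that $l_1$ is the centre of the claw and that its three end-vertices $l_1^+,v_1,v_2^+$ are pairwise non-adjacent and all light. First I would record the three edges incident to $l_1$: the edge $l_1l_1^+$ is an edge of $C$, the edge $l_1v_1$ holds by the definition of $l_1$ in Claim~\ref{c2}, and $l_1v_2^+\in E(G)$ is exactly Claim~\ref{c5} with $i=1$. Thus $l_1$ is adjacent to all of $l_1^+,v_1,v_2^+$.

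Next I would establish the three non-adjacencies, in the stronger form of non-membership in $\widetilde{E}(G)$. Using $v_1^-v_1^+\in E(G)\subseteq\widetilde{E}(G)$, Lemma~\ref{le3}$(c)$ applied to the pair $v_1,v_2$ gives $v_1v_2^+\notin\widetilde{E}(G)$; since $l_1\in C[v_1,v_2^-]\cap N(v_1)\cap N(v_2^-)$, Lemma~\ref{le3}$(d)$ gives $v_1l_1^+\notin\widetilde{E}(G)$; and Lemma~\ref{le3}$(e)$ gives $v_2^+l_1^+\notin\widetilde{E}(G)$. Hence $\{l_1,l_1^+,v_1,v_2^+\}$ is an induced $K_{1,3}$, and moreover every pair of its end-vertices has degree sum less than $n$, so at most one of $l_1^+,v_1,v_2^+$ is heavy.

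It remains to prove that none of the three ends is heavy. For $v_1$ this is clean: applying Lemma~\ref{le3}$(c)$ instead to the pair $v_1,v_r$ (via the path $v_1uv_r$), and again using $v_1^-v_1^+\in\widetilde{E}(G)$, yields $v_1v_r^-\notin\widetilde{E}(G)$ and $v_1v_r^+\notin\widetilde{E}(G)$. In Subcase~2.1 we have $v_r^-v_r^+\notin E(G)$, so $\{v_r,v_r^-,v_r^+,u\}$ is a claw (the non-edges $uv_r^\pm$ come from Lemma~\ref{le3}$(a)$); as $u$ lies off the heavy cycle $C$ it is light, so $G$ being $1$-heavy forces $v_r^-$ or $v_r^+$ to be heavy, and either way the two degree-sum bounds give $d(v_1)<n/2$. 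For $v_2^+$ I would invoke Lemma~\ref{le3}$(b)$: since $v_2^+v_j^+\notin\widetilde{E}(G)$ for every $j\neq 2$, if any $v_j^+$ with $j\ne 2$ is heavy then $v_2^+$ is light; as Lemma~\ref{le3}$(b)$ permits at most one heavy vertex in $N_C^+(u)$, the only way $v_2^+$ can fail to be light is that $v_2^+$ is itself that unique heavy vertex. Once $v_1$ and $v_2^+$ are light, the pairwise degree-sum bounds leave $l_1^+$ as the only possible heavy end.

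This is where the \emph{main obstacle} lies: excluding the scenario ``$v_2^+$ is the unique heavy vertex of $N_C^+(u)$'' and the symmetric scenario ``$l_1^+$ is heavy''. In the first, $v_r^+$ is light, so Subcase~2.1 forces $v_r^-$ heavy and hence $v_2^+v_r^-\in\widetilde{E}(G)$; splicing this virtual edge together with the chords $v_1l_1$, $l_1v_2^-$, $l_1v_2^+$ and the edge $v_1^-v_1^+$ into an Ore-cycle that absorbs $u$ while still covering all of $V(C)$ produces, via Lemma~\ref{le2}, a cycle through every vertex of $C$ that is strictly longer than $C$; since $C$ already contains all heavy vertices, this contradicts the maximality of the longest heavy cycle. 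An analogous Ore-cycle, using the virtual edge between $l_1^+$ and whichever of $v_r^\pm$ is heavy, excludes $l_1^+$ being heavy. The delicate point I expect to be hardest is arranging each rerouting so that no vertex of $C$ is dropped; once this is done, all of $l_1^+,v_1,v_2^+$ are light, $\{l_1,l_1^+,v_1,v_2^+\}$ is the asserted light claw, and this contradicts the $1$-heaviness of $G$, completing Subcase~2.1.
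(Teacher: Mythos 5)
Your setup is correct and in one place cleaner than the paper's: the three edges at $l_1$, the three non-adjacencies $v_1v_2^+\notin\widetilde{E}(G)$, $v_1l_1^+\notin\widetilde{E}(G)$ and $v_2^+l_1^+\notin\widetilde{E}(G)$ via Lemma 3 $(c)$, $(d)$, $(e)$, and the lightness of $v_1$ all check out. (The paper gets $v_1l_1^+\notin E(G)$ more laboriously through an auxiliary light claw at $v_1$; your appeal to Lemma 3 $(d)$ with $l=l_1\in N(v_1)\cap N(v_2^-)$ is immediate.) The genuine gap is at the decisive step, which you yourself flag: ruling out that $v_2^+$ or $l_1^+$ is the one permitted heavy end. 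You defer both exclusions to an Ore-cycle ``splicing'' that is never constructed, and it is doubtful that it can be. In the scenario where $v_2^+$ and $v_r^-$ are both heavy, the only virtual edge available is $v_2^+v_r^-$, and the chords at your disposal ($v_1l_1$, $l_1v_2^-$, $l_1v_2^+$, $v_1^-v_1^+$, $v_2^-v_2^+$) all have both ends inside $C[v_1^-,v_2^+]$. The long arc $C[v_2^+,v_r^-]$ can therefore be entered or left only at its two endpoints, and the one virtual edge joins those endpoints to each other; a closed $o$-walk through $u$ that traverses this arc and also covers $C[v_r,v_1]$ and $C[v_1,v_2]$ does not seem to close up without repeating a vertex or dropping part of $C$. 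So ``once this is done'' hides the entire difficulty.

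The paper takes a different and much shorter route here, and you should too. From the claw $\{v_r,v_r^-,v_r^+,u\}$, one of $v_r^-,v_r^+$ is heavy. If $v_r^-$ were heavy, then Lemma 3 $(b)$, $(c)$, $(e)$ (applied with $v_j=v_r$) would make $v_2^-$, $v_1$ and $l_1^-$ all light, and Claim 6 says $\{l_1,l_1^-,v_1,v_2^-\}$ is an induced claw --- hence a light claw, contradicting 1-heaviness. Therefore $v_r^+$ is heavy, and then Lemma 3 $(b)$ makes $v_2^+$ (and $v_1^+$) light, Lemma 3 $(c)$ makes $v_1$ light, and Lemma 3 $(e)$ makes $l_1^+$ light, all in one stroke. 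The idea you are missing is this companion claw $\{l_1,l_1^-,v_1,v_2^-\}$ supplied by Claim 6, which pins down \emph{which} of $v_r^-$, $v_r^+$ is the heavy one; once that is known, no Ore-cycle is needed.
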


\begin{proof}
By Lemma \ref{le3} $(a)$, we have $uv_r^-\notin E(G)$ and $uv_r^+\notin E(G)$. Now $\{v_r,v_r^-,v_r^+,u\}$ 
induces a claw. Since $G$ is 1-heavy and $u$ is light, $v_r^-$ is heavy or $v_r^+$ is heavy.
\begin{subclaim}\label{s1}
$v_r^+$ is heavy.
\end{subclaim}

\begin{proof}
Suppose that $v_r^-$ is heavy. By Lemma \ref{le3} $(b)$, $(c)$ and $(e)$, $v_2^-$, $v_1$ and $l_1^-$ are light.
By Claim \ref{c6}, $\{l_1,l_1^-,v_1,v_2^-\}$ induces a light claw, a contradiction.
\end{proof}

\begin{subclaim}\label{s2}
$v_1^+,v_2^+$, $v_1$ and $l_1^+$ are light.
\end{subclaim}

\begin{proof}
By Claim \ref{s1}, $v_r^+$ is heavy. By Lemma \ref{le3} $(b)$, $(c)$ and $(e)$, $v_1^+,v_2^+$,
$v_1$ and $l_1^+$ are light.
\end{proof}

\begin{subclaim}\label{s3}
$v_1l_1^+\notin E(G)$.
\end{subclaim}

\begin{proof}
If $v_1l_1^+\in E(G)$, then consider the subgraph induced by $\{v_1,u,v_1^+,l_1^+\}$. 
It is obvious that $uv_1^+\notin \widetilde{E}(G)$ and $ul_1^+\notin \widetilde{E}(G)$. 
By Claim \ref{c5} and Lemma \ref{le3} $(f)$, we have $v_1^+l_1^+\notin E(G)$.
By Claim \ref{s2}, $\{v_1,u,v_1^+,l_1^+\}$ induces a light claw, a contradiction.
\end{proof}

Now consider the subgraph induced by $\{l_1,l_1^+,v_1,v_2^+\}$. By Claim \ref{c5} 
and Lemma \ref{le3} $(c)$, $(e)$, we have $v_1v_2^+\notin \widetilde{E}(G)$ and $l_1^+v_2^+\notin \widetilde{E}(G)$.
By Claim \ref{s3}, we have $v_1l_1^+\notin E(G)$. By Claim 7.2, $\{l_1,l_1^+,v_1,v_2^+\}$ induces a light claw.
\end{proof}
By Claim \ref{c7}, a contradiction.

\begin{subcase}
$v_r^-v_r^+\in E(G)$.
\end{subcase}

\begin{claim}\label{c8}
$v_1$ is heavy.
\end{claim}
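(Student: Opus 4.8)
The plan is to use the claw $\{l_1,l_1^-,v_1,v_2^-\}$ produced by Claim~\ref{c6} and to show that its heavy end (which exists because $G$ is $1$-heavy) must be $v_1$. Suppose, for contradiction, that $v_1$ is light. Applying Claim~\ref{c2} with $i=1$ on the left of $v_1$, together with the symmetric versions of Claims~\ref{c5} and~\ref{c6} (this is legitimate here because $v_r^-v_r^+\in E(G)$ in the present subcase), yields a mirror vertex $l_1'\in C(v_r^+,v_1^-]$ with $l_1'v_1,\,l_1'v_r^+,\,l_1'v_r^-\in E(G)$ and an induced claw $\{l_1',l_1'^+,v_1,v_r^+\}$. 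Since $v_1$ is light, $1$-heaviness forces a heavy end in each claw: one of $l_1^-,v_2^-$, and one of $l_1'^+,v_r^+$.

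First I would show that these heavy ends are forced to be the predecessor $v_2^-$ and the successor $v_r^+$, all neighbouring auxiliary vertices being light. Once $v_2^-$ is heavy, Lemma~\ref{le3}$(e)$ gives $l_1^-v_2^-\notin\widetilde{E}(G)$, so $l_1^-$ is light; symmetrically $l_1'^+$ is light as soon as $v_r^+$ is heavy. Conversely, if one of the interior vertices $l_1^-,l_1^+,l_1'^-,l_1'^+$ were itself heavy, I would pair it with the heavy vertex supplied by the opposite flanking claw and splice in the crossing edges $v_1^-v_1^+$, $v_2^-v_2^+$, $v_r^-v_r^+$ to build an Ore-cycle $C'$ with $V(C)\subset V(C')$; by Lemma~\ref{le2} this upgrades to a heavy cycle strictly longer than $C$, contradicting the choice of $C$. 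Hence $v_2^-$ and $v_r^+$ are heavy, and by Lemma~\ref{le3}$(b)$ every other predecessor and successor of $N_C(u)$ is light; in particular $v_1,v_1^{-},v_1^{+},v_2^{+}$ and all of $l_1^{\pm},l_1'^{\pm}$ are light.

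With $v_2^-$ and $v_r^+$ heavy and $v_1$ light I would then close the argument with an Ore-cycle of the same shape as the one in the proof of Claim~\ref{c4}. Exactly as in Claim~\ref{c3}, one first checks $v_1l_1^-\in E(G)$ (otherwise $\{l_1,l_1^+,v_1,l_1^-\}$ is a light claw, using $v_1l_1^+\notin\widetilde{E}(G)$ and $l_1^-l_1^+\notin\widetilde{E}(G)$ from Lemma~\ref{le3}$(d)$) and then $l_1v_2\in E(G)$ (otherwise $G[\{v_1^-,l_1,v_2^-,v_2,u\}]$ is an induced path of length $4$ between the distance-$2$ pair $v_1^-,u$, contradicting the almost distance-hereditary property). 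Since $v_2^-$ and $v_r^+$ are both heavy, $v_2^-v_r^+\in\widetilde{E}(G)$, and
\[
C'=v_1l_1^-\overleftarrow{C}[l_1^-,v_1^+]v_1^+v_1^-\overleftarrow{C}[v_1^-,v_r^+]v_r^+v_2^-\overleftarrow{C}[v_2^-,l_1]l_1v_2C[v_2,v_r]v_ruv_1
\]
is an Ore-cycle with vertex set $V(C)\cup\{u\}$, so $V(C)\subset V(C')$. This contradicts the maximality of the heavy cycle $C$, and therefore $v_1$ is heavy.

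The hard part will be the exclusion step in the second paragraph, namely ruling out that one of the interior auxiliary vertices $l_1^{\pm},l_1'^{\pm}$ is the heavy end of a claw. Unlike the successors $v_i^+$ and predecessors $v_i^-$, these vertices lie strictly inside the arcs $C(v_1,v_2)$ and $C(v_r,v_1)$ and are therefore \emph{not} controlled by the counting estimate of Lemma~\ref{le3}$(b)$; their heaviness can only be refuted by producing an explicit longer Ore-cycle assembled from the three crossing edges, and this has to be organised case by case according to which end of the opposite claw is heavy. Verifying the two auxiliary edges $v_1l_1^-\in E(G)$ and $l_1v_2\in E(G)$ used to close the final Ore-cycle is a secondary technical point, handled by the same light-claw and almost-distance-hereditary arguments.
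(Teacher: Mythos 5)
Your overall strategy differs from the paper's, and the difference matters. The paper does not argue by contradiction on $v_1$: it directly eliminates the other two ends of the claw $\{l_1,l_1^-,v_1,v_2^-\}$ by showing that if either $v_2^-$ or $l_1^-$ were heavy, then a \emph{second} auxiliary claw $\{l_r,l_r^-,v_r,v_1^-\}$ --- centred at the vertex $l_r\in C[v_r^{+},v_1^{-})$ supplied by Claim 2 applied to $v_r$, with $l_rv_r\in E(G)$ and $l_rv_1^-\in E(G)$ --- would be a light claw. The work is done by the non-adjacencies of Lemma \ref{le3}$(b)$, $(e)$, $(f)$ (for instance $v_rl_1^-\notin\widetilde{E}(G)$ and $v_1^-l_1^-\notin\widetilde{E}(G)$ force $v_r$ and $v_1^-$ to be light when $l_1^-$ is heavy), plus a single explicit Ore-cycle to show $l_r^-$ is light. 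You never introduce this claw; your mirror claw $\{l_1',l_1'^{+},v_1,v_r^{+}\}$ is a different object and plays a different role.

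The genuine gap is exactly the step you yourself flag as ``the hard part'': ruling out that $l_1^-$ (resp.\ $l_1'^{+}$) is the heavy end of its claw. Your proposed mechanism --- pair the heavy interior vertex with the heavy end of the opposite flanking claw and ``splice in the crossing edges'' to obtain an Ore-cycle on $V(C)\cup\{u\}$ --- is never carried out, and it does not obviously work: a chord such as $l_1^-v_r^{+}$ (or worse, $l_1^-l_1'^{+}$) joins the interiors of the arcs $C(v_1,v_2)$ and $C(v_r,v_1)$, and any closed walk through $V(C)\cup\{u\}$ that uses it together with $v_1^-v_1^{+}$, $v_2^-v_2^{+}$, $v_r^-v_r^{+}$ must still re-insert the skipped vertices $v_1,v_2,v_r$, whose only usable attachments are $u$ (usable once), their cycle-neighbours, and the $l$-vertices; with only the edges you have established, such a cycle does not assemble, and you exhibit none. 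In addition, your intermediate assertion that Lemma \ref{le3}$(b)$ makes ``all of $l_1^{\pm},l_1'^{\pm}$'' light is false (and you contradict it in your own closing paragraph): that lemma only controls $N_C^{+}(u)$ and $N_C^{-}(u)$. The final stage of your argument --- once $v_2^-$ and $v_r^{+}$ are both heavy, the Ore-cycle $v_1l_1^-\overleftarrow{C}[l_1^-,v_1^{+}]v_1^{+}v_1^{-}\overleftarrow{C}[v_1^{-},v_r^{+}]v_r^{+}v_2^{-}\overleftarrow{C}[v_2^{-},l_1]l_1v_2C[v_2,v_r]v_ruv_1$ yields a contradiction --- is sound modulo the omitted verifications $v_1^-l_1\in E(G)$ and $v_1^-v_2\notin E(G)$ needed to prove $l_1v_2\in E(G)$, but the exclusion step on which everything rests is missing.
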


\begin{proof}
By Claim \ref{c2}, there exists a vertex ${l_1}\in C[v_1^{+},v_2^-)$ such that $v_2^-l_1\in E(G)$
and $l_1v_1\in E(G)$. By Claims \ref{c5} and \ref{c6}, $v_2^+l_1\in E(G)$,
$\{l_r,l_r^-,v_r,v_1^-\}$ and $\{l_1,l_1^-,v_1,v_2^-\}$ induce claws.

Suppose that $v_2^-$ is heavy. By Lemma \ref{le3} $(b)$, $(c)$ and $(e)$, 
we have $v_1^-$, $v_r$ and $l_r^-$ are light. Now $\{l_r,l_r^-,v_r,v_1^-\}$ 
induces a light claw, contradicting that $G$ is 1-heavy.

Suppose that $l_1^-$ is heavy. By Claim \ref{c8}, Lemma \ref{le3} $(e)$ and $(f)$, since $v_2^+l_1\in E(G)$, 
we have $v_rl_1^-\notin \widetilde{E}(G)$ and $v_1^-l_1^-\notin \widetilde{E}(G)$. 
This implies that $v_r,v_1^-$ are light. At the same time, we can prove that ${l_r}^-$ 
is light (otherwise, 
$C'=v_ruv_2\overleftarrow{C}[v_2,l_1]l_1v_2^+C[v_2^+,v_r^-]v_r^-v_r^+C[v_r^+,l_r^-]l_r^-l_1^-\break \overleftarrow{C}[l_1^-,l_r]lv_r$ 
is an \emph{o}-cycle such that $V(C)\subset V(C')$, a contradiction). Now $\{l_r,l_r^-,v_r,v_1^-\}$ 
induces a light claw, contradicting that $G$ is 1-heavy.

Note that $\{l_1,l_1^-,v_1,v_2^-\}$ induces a claw and $v_2^-,l_1^-$ are light. 
We can see $v_1$ is heavy since $G$ is 1-heavy.
\end{proof}

By Claim \ref{c2}, there exists a vertex ${l_2}'\in C(v_1^{+},v_2^-]$ such 
that $v_1^+{l_2}'\in E(G)$ and ${l_2}'v_2\in E(G)$.

\begin{claim}\label{c9}
$l_r^-,l_r^+,{l_2}'^-,{l_2}'^+$ are light.
\end{claim}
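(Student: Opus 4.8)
The governing observation is that $v_1$ is heavy by Claim~\ref{c8}, so to certify that a vertex $x$ is light it suffices to derive a contradiction from the assumption that $x$ is heavy: if both $x$ and $v_1$ are heavy then $d(x)+d(v_1)\ge n$, whence $xv_1\in\widetilde{E}(G)$. The plan is therefore, for each of the four vertices $x\in\{l_r^-,l_r^+,{l_2}'^-,{l_2}'^+\}$, to assume $x$ is heavy, use the resulting edge $xv_1\in\widetilde{E}(G)$ together with the chords already available in Subcase~2.2, and exhibit an \emph{o}-cycle $C'$ with $V(C)\subsetneq V(C')$. By Lemma~\ref{le2} this produces a cycle containing all of $V(C)$ and strictly longer than $C$; since $C$ carries every heavy vertex, that cycle is a longer heavy cycle, contradicting the choice of $C$. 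Recall that by Claim~\ref{c1} the only off-cycle vertex to be spliced in is $u$, and that $uv_1,uv_2,uv_r\in E(G)$.

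The chords at our disposal are these three edges at $u$, the bypass edges $v_1^-v_1^+,v_2^-v_2^+,v_r^-v_r^+\in E(G)$ (all present in Subcase~2.2), and the chords $v_rl_r$ and $v_2{l_2}'$. For instance, for $x=l_r^-$ I would take
\[
C'=uv_rl_r\,C[l_r,v_1^-]\,v_1^+C[v_1^+,v_r^-]\,v_r^+C[v_r^+,l_r^-]\,v_1u,
\]
where the three implicit steps are the edges $v_1^-v_1^+$, $v_r^-v_r^+$ and the assumed edge $l_r^-v_1\in\widetilde{E}(G)$; one checks directly that this visits every vertex of $V(C)\cup\{u\}$ exactly once, so it is an \emph{o}-cycle with $V(C)\subsetneq V(C')$. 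The case $x=l_r^+$ is settled by the analogous traversal that sweeps each arc in the reverse direction and instead uses $l_r^+v_1$, and the two cases $x\in\{{l_2}'^-,{l_2}'^+\}$ are the mirror constructions in which the pair $(v_r,v_1)$ and the chord $v_rl_r$ are replaced by the pair $(v_1,v_2)$ and the chord $v_2{l_2}'$. Thus all four reduce by the symmetry $l_r\leftrightarrow{l_2}'$ and the orientation-reversal $(\cdot)^-\leftrightarrow(\cdot)^+$ to the single base construction above.

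For the arcs in these \emph{o}-cycles to be non-degenerate I would record two easy facts. First, $l_r\neq v_r^+$, already noted after Claim~\ref{c5} from Lemma~\ref{le3}$(b)$, so that the segment $C[v_r^+,l_r^-]$ is well defined; and since $l_r\in C[v_r^+,v_1^-)$ we also have $l_r^+\le v_1^-$, so $C[l_r,v_1^-]$ is fine. Second, the mirror statement ${l_2}'\neq v_2^-$, which holds because the symmetric form of Claim~\ref{c2} gives $v_1^-{l_2}'\in E(G)$, and ${l_2}'=v_2^-$ would force $v_1^-v_2^-\in\widetilde{E}(G)$, contradicting Lemma~\ref{le3}$(b)$. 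With these in hand every consecutive pair in each sequence lies in $\widetilde{E}(G)$, so each is a genuine \emph{o}-cycle.

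The only real work—and the step I expect to be most error-prone—is the bookkeeping in the four traversals: each arc must be oriented so that the whole of $V(C)$ is swept exactly once while $u$ is spliced in through the two relevant neighbours ($v_1$ and $v_r$, respectively $v_1$ and $v_2$), and so that the two bypass edges and the assumed heavy edge fall in mutually consistent positions. I do not anticipate needing any chord beyond those listed above, which keeps each \emph{o}-cycle short; the symmetry between the $l_r$ and ${l_2}'$ pairs and between the $(\cdot)^-$ and $(\cdot)^+$ versions should let three of the four constructions be read off from the first by reflection, so the verification, while fiddly, is essentially a single computation carried out four times.
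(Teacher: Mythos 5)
Your proposal is correct and follows essentially the same route as the paper: since $v_1$ is heavy by Claim~8, assuming any of the four vertices is heavy yields an edge of $\widetilde{E}(G)$ to $v_1$, and your \emph{o}-cycle for $l_r^-$ is literally the paper's cycle $v_rl_rC[l_r,v_1^-]v_1^-v_1^+C[v_1^+,v_r^-]v_r^-v_r^+C[v_r^+,l_r^-]l_r^-v_1uv_r$ written from a different starting vertex, with the remaining three cases handled by the same symmetry the paper invokes. The only quibble is a citation slip: the fact $v_1^-{l_2}'\in E(G)$ used to rule out ${l_2}'=v_2^-$ comes from the symmetric form of Claim~5 (as the paper notes inside Claim~11), not from Claim~2, which only gives $v_1^+{l_2}'\in E(G)$.
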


\begin{proof}
If $l_r^+$ is heavy, then $C'=v_ruv_1l_r^+C[l_r^+,v_1^-]v_1^-v_1^+C[v_1^+,v_r^-]v_r^-v_r^+C[v_r^+,l_r]l_rv_r$ 
is an \emph{o}-cycle such that $V(C)\subset V(C')$, a contradiction. If $l_r^-$ is heavy, 
then $C'=v_rl_rC[l_r,v_1^-]v_1^-v_1^+\break C[v_1^+,v_r^-]v_r^-v_r^+C[v_r^+,l_r^-]l_r^-v_1uv_r$ 
is an \emph{o}-cycle such that $V(C)\subset V(C')$, a contradiction.

Similarly, by symmetry, we can prove that ${l_2}'^-,{l_2}'^+$ are light.
\end{proof}

\begin{claim}\label{c10}
$v_r$ is heavy.
\end{claim}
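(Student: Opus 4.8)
The plan is to force $v_r$ to be heavy through a single induced claw together with the $1$-heaviness of $G$, mirroring the treatment of $v_1$ in Claim~\ref{c8}. Since $v_r^-v_r^+\in E(G)$ in this subcase, Claim~\ref{c2} yields the forward neighbour $l_r\in C[v_r^+,v_1^-)$ with $v_rl_r,v_1^-l_r\in E(G)$, Claim~\ref{c5} gives $v_1^+l_r\in E(G)$, and Claim~\ref{c6} shows that $\{l_r,l_r^-,v_r,v_1^-\}$ is an induced claw with centre $l_r$. Because $G$ is $1$-heavy, at least one of the three end vertices $l_r^-,v_r,v_1^-$ is heavy, and Claim~\ref{c9} already records that $l_r^-$ is light. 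Hence the entire statement reduces to proving that $v_1^-$ is light, for then $v_r$ must be the heavy end.

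First I would clear the degenerate position: if $l_r^+=v_1^-$, then $v_1^-$ is light at once, since Claim~\ref{c9} says $l_r^+$ is light. So I may assume $l_r^+\in C(v_r^+,v_1^-)$ lies strictly before $v_1^-$, and suppose for contradiction that $v_1^-$ is heavy. As $v_1^-\in N_C^-(u)$, Lemma~\ref{le3}$(b)$ then forces every other predecessor to be light, in particular $v_2^-$ and $v_r^-$. At this point I have at my disposal the heavy vertex $v_1$ from Claim~\ref{c8}, the (assumed) heavy vertex $v_1^-$, the light vertices $l_r^-,l_r^+,{l_2}'^-,{l_2}'^+$ from Claim~\ref{c9}, the chords $v_1^-v_1^+,\ v_1^+l_r,\ v_1^-l_r,\ v_r^-v_r^+$, and the edges around ${l_2}'$, which by Claim~\ref{c2} and the symmetric version of Claim~\ref{c5} is adjacent to $v_1^-,v_1^+,v_2$.

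From this cluster of light vertices surrounding the (supposedly heavy) $v_1^-$ I expect the contradiction to appear as an induced claw all of whose end vertices are light—centred at $l_r$ or at ${l_2}'$—contradicting that $G$ is $1$-heavy; the alternative is to produce a second heavy vertex non-adjacent to $v_1$ (the natural candidate being one of $v_r^\pm$), use it with the heavy $v_1$ to form an $o$-edge, and then splice $u$ into $C$ through the chords above to build an $o$-cycle $C'$ with $V(C)\subset V(C')$, contradicting the maximality of the longest heavy cycle $C$ via Lemma~\ref{le2}. Either contradiction certifies that $v_1^-$ is light, and the claw $\{l_r,l_r^-,v_r,v_1^-\}$ then forces $v_r$ heavy.

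The main obstacle is precisely this last contradiction. Unlike Claim~\ref{c8}, the symmetric partner claw $\{l_1,l_1^-,v_1,v_2^-\}$ is of no use here, since its end $v_1$ is already known to be heavy, so the naive mirror of the earlier argument collapses; a genuinely new configuration must be found. Consequently I anticipate the real work to be a careful position analysis—pinning down the cyclic order of $v_r^\pm,l_r^\pm,v_1^\pm,{l_2}'^-,{l_2}'^+$ and certifying the needed non-adjacencies through parts $(b),(c),(e),(f)$ of Lemma~\ref{le3} and Claim~\ref{c5}—so that the chosen light claw is truly induced, or so that the spliced $o$-cycle genuinely traverses every vertex of $C$. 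Controlling the possible coincidences among these vertices (for instance $l_r$ being adjacent to $v_1^-$, or ${l_2}'$ falling at $v_2^-$) is where the bookkeeping will be heaviest.
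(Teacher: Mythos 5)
There is a genuine gap. You chose the claw $\{l_r,l_r^-,v_r,v_1^-\}$ from Claim~\ref{c6}, which leaves you needing to show that $v_1^-$ is light, and you never actually do this: the second half of your argument only says what you ``expect'' or ``anticipate'' the contradiction to look like (a light claw centred at $l_r$ or ${l_2}'$, or an $o$-cycle splice), without exhibiting either. This is not a minor bookkeeping omission. Lemma~\ref{le3}$(b)$ only guarantees at most \emph{one} heavy vertex in $N_C^-(u)$, and nothing established so far prevents $v_1^-$ from being that one heavy predecessor; so the case you must exclude is a live possibility requiring a real argument, and your proposal stops exactly where the work begins. You correctly diagnose that the mirror of Claim~\ref{c8} collapses (its partner claw has the already-heavy end $v_1$), but you do not replace it with anything.

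The missing idea is simply to pick a better claw. The paper considers $\{l_r,l_r^-,l_r^+,v_r\}$, centred at $l_r$, whose \emph{other two} end vertices $l_r^-$ and $l_r^+$ are both already known to be light by Claim~\ref{c9}. It verifies that this is an induced claw using Claim~\ref{c5} together with Lemma~\ref{le3}$(e)$ (giving $v_rl_r^-\notin\widetilde{E}(G)$) and Lemma~\ref{le3}$(d)$ (giving $l_r^-l_r^+\notin\widetilde{E}(G)$ and $v_rl_r^+\notin\widetilde{E}(G)$), and then $1$-heaviness immediately forces $v_r$ to be heavy. No statement about $v_1^-$ is needed at all. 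If you want to salvage your route, you would have to supply a complete proof that $v_1^-$ is light, which the paper never needs and which appears substantially harder than the claim itself.
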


\begin{proof}
Consider the subgraph induced by $\{l_r,l_r^-,l_r^+,v_r\}$. By Claim \ref{c5} 
and Lemma \ref{le3} $(e)$, we have $v_rl_r^-\notin \widetilde{E}(G)$. 
By Lemma \ref{le3} $(d)$, we have $l_r^-l_r^+\notin \widetilde{E}(G)$ 
and $v_rl_r^+\notin \widetilde{E}(G)$. Since $G$ is 1-heavy, $v_r$ 
is heavy by Claim \ref{c9}.
\end{proof}

\begin{claim}\label{c11}
$\{v_1^-,v_1^+,l_r^+,{l_2}'^+\}$ induces a light claw.
\end{claim}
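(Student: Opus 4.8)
The plan is to prove the claim by exhibiting a light claw, which contradicts the hypothesis that $G$ is $1$-heavy and so closes Subcase 2.2 (and with it Theorem \ref{th4}). Writing the center first as usual, the assertion is that $\{v_1^-,v_1^+,l_r^+,{l_2}'^+\}$ induces a $K_{1,3}$ centered at $v_1^-$ whose three end vertices $v_1^+,l_r^+,{l_2}'^+$ are all light. I would split the argument in two: first verify the claw structure, then verify that every end vertex is light.

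For the claw structure there are three incidences at the center and three non-adjacencies among the end vertices to check. The edge $v_1^-v_1^+$ is available from the standing assumption $v_1^-v_1^+\in E(G)$. The remaining two incidences at $v_1^-$ I would try to obtain as in the proof of Claim \ref{c5}: using the already-recorded edges $v_1^-l_r$ and $v_1^+l_r$ (Claim \ref{c5}) and $v_1^+{l_2}'$ (Claim \ref{c2}) together with the chord $v_1^-v_1^+$, one selects a connected induced subgraph $H$ on the relevant arc and invokes the almost-distance-hereditary inequality $d_H(x,y)\le d_G(x,y)+1$ to force the missing edge, the alternative being an $o$-cycle through $V(C)\cup\{u\}$ that contradicts the maximality of $C$ by Lemma \ref{le2}. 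For the three non-adjacencies $v_1^+l_r^+$, $v_1^+{l_2}'^+$ and $l_r^+{l_2}'^+$ I would appeal to Lemma \ref{le3}(d)--(f) together with Claim \ref{c5}, exactly as the analogous non-edges were produced in the proofs of Claims \ref{c6}, \ref{c7} and \ref{c10}.

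For lightness, two of the three end vertices are immediate: $l_r^+$ and ${l_2}'^+$ are light by Claim \ref{c9}. The one genuinely new point is that $v_1^+$ is light. I would argue this by contradiction: if $v_1^+$ were heavy, then, since $v_1$ and $v_r$ are heavy by Claims \ref{c8} and \ref{c10} and $v_1^-v_1^+,v_r^-v_r^+\in E(G)$, the incidences $v_1^-l_r,v_1^+l_r,v_rl_r$ and the adjacencies of $u$ to $v_1,v_r$ would let me splice an $o$-cycle spanning $V(C)\cup\{u\}$ (contradicting the maximality of the heavy cycle $C$ via Lemma \ref{le2}), or else would place a second heavy vertex in $N_C^+(u)$, contradicting Lemma \ref{le3}(b). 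Hence $v_1^+$ is light, all three end vertices are light, and the induced claw at $v_1^-$ is a light claw, contradicting $1$-heaviness.

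The step I expect to be the main obstacle is the lightness of $v_1^+$: unlike $l_r^\pm$ and ${l_2}'^\pm$ it is not covered by Claim \ref{c9}, so it must be extracted from the interaction of Lemma \ref{le3}(b) with the two already-located heavy vertices $v_1,v_r$ and a carefully routed $o$-cycle. A secondary difficulty is the simultaneous bookkeeping required to force the two center-to-end edges while ruling out the three end-to-end edges, since the helper vertices $l_r,{l_2}'$ and their successors sit on opposite arcs of $C$ around $v_1$.
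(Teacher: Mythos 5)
Your identification of the claw (center $v_1^-$, end vertices $v_1^+$, $l_r^+$, ${l_2}'^+$) and the two easy lightness facts from Claim \ref{c9} match the paper, but the mechanisms you propose for the remaining steps are not the ones that work, and in at least two places they would fail as described. For the two center-to-end edges $v_1^-l_r^+$ and $v_1^-{l_2}'^+$ the paper uses neither an almost-distance-hereditary argument nor an $o$-cycle: there is no pair among these vertices whose distance in $G$ is controlled (all of them lie on $C$), so the inequality $d_H(x,y)\le d_G(x,y)+1$ has nothing to bite on. Instead the paper assumes the edge is absent and exhibits a light claw centered at $l_r$ (resp.\ ${l_2}'$), namely $\{l_r,l_r^-,l_r^+,v_1^-\}$, whose non-adjacencies come from Lemma \ref{le3}$(d)$ and $(e)$ and whose end vertices $l_r^-,l_r^+$ are light by Claim \ref{c9}. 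For the non-adjacency $l_r^+{l_2}'^+\notin E(G)$ your appeal to Lemma \ref{le3}$(d)$--$(f)$ cannot succeed: every part of that lemma involves the vertices $v_i,v_j\in N_C(u)$ and a single chord-endpoint $l$ on one arc, whereas $l_r^+$ and ${l_2}'^+$ are successors of two different such vertices on two different arcs. The paper must build an explicit $o$-cycle through $V(C)\cup\{u\}$ out of the hypothetical chord $l_r^+{l_2}'^+$ together with the edges $v_2^-v_2^+$ and $v_r^-v_r^+$, and you give no such construction.

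On the point you single out as the main obstacle, the lightness of $v_1^+$: you are right that it requires justification (the paper in fact passes over it silently), but your proposed route does not close it. Lemma \ref{le3}$(b)$ only says that at most one vertex of $N_C^+(u)$ is heavy, which is perfectly compatible with that one heavy vertex being $v_1^+$. The step that actually works is Lemma \ref{le3}$(c)$: since $v_r^-v_r^+\in E(G)$ in this subcase, $(c)$ gives $v_rv_1^+\notin\widetilde{E}(G)$, and since $v_r$ is heavy by Claim \ref{c10} this forces $d(v_1^+)<n/2$. Your alternative of ``splicing an $o$-cycle'' is left unspecified. As written, therefore, the proposal establishes neither that the center is adjacent to all three end vertices, nor that the four vertices induce a claw, nor that all three end vertices are light.
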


\begin{proof}
By Claim \ref{c9}, $l_r^+,{l_2}'^+$ are light. Note that $v_1^-v_1^+\in E(G)$. Now we suffice 
to check the following facts: $\{v_1^-l_r^+, v_1^-{l_2}'^+\}\subset E(G)$ and 
$\{v_1^+l_r^+,v_1^+{l_2}'^+,l_r^+{l_2}'^+\}\cap E(G)=\emptyset$.

\begin{subclaim}
$v_1^-l_r^+\in E(G)$ and $v_1^-{l_2}'^+\in E(G)$.
\end{subclaim}

\begin{proof}
Suppose that $v_1^-l_r^+\notin E(G)$. By Lemma \ref{le3} $(d)$ and $(e)$, we have $l_r^-l_r^+\notin \widetilde{E}(G)$ 
and $v_1^-l_r^-\notin \widetilde{E}(G)$. Now $\{l_r,l_r^-,l_r^+,v_1^-\}$ induces a light claw, a contradiction.

By Claim \ref{c5} and symmetry, we have $v_1^-{l_2}'\in E(G)$. Suppose that $v_1^-{l_2}'^+\notin E(G)$. By Lemma \ref{le3} 
$(d)$ and $(e)$, we have ${l_2}'^-{l_2}'^+\notin \widetilde{E}(G)$ and $v_1^-{l_2}'^-\notin \widetilde{E}(G)$.
 Now $\{{l_2}',{l_2}'^-,{l_2}'^+,v_1^-\}$ induces a light claw, a contradiction.
\end{proof}

\begin{subclaim}
$\{v_1^+l_r^+,v_1^+{l_2}'^+,l_r^+{l_2}'^+\}\cap E(G)=\emptyset$.
\end{subclaim}

\begin{proof}
By Lemma \ref{le3} $(e)$, since $v_rl_r\in E(G)$ and $v_2{l_2}'\in E(G)$, we have 
$v_1^+l_r^+\notin \widetilde{E}(G)$ and $v_1^+{l_2}'^+\notin \widetilde{E}(G)$. 
At the same time, we can prove that $l_r^+{l_2}'^+\notin E(G)$, since otherwise, $C'=v_ruv_2{l_2}'\overleftarrow{C}[{l_2}',{l_r}^+]{l_r}^+{l_2}'^+C[{l_2}'^+,v_2^-]v_2^-v_2^+C[v_2^+,v_r^-]v_r^-v_r^+C[v_r^+,l_r]l_rv_r$ 
is an \emph{o}-cycle such that $V(C)\subset V(C')$, a contradiction.
\end{proof}
It is proved that $\{v_1^-,v_1^+,l_r^+,{l_2}'^+\}$ induces a light claw.
\end{proof}
By Claim \ref{c11}, a contradiction. The proof of Theorem 4 is complete. {\hfill$\Box$}

\medskip


\end{document}